\documentclass[11pt]{article}

\usepackage[a4paper,margin=2.5cm]{geometry}
\usepackage[T1]{fontenc}
\usepackage{amsmath,amssymb,amsthm}
\usepackage{microtype}
\usepackage{tikz}
\usetikzlibrary{arrows.meta,calc,decorations.pathreplacing}
\usepackage[hidelinks]{hyperref}

\newcommand{\R}{\mathbb R}
\newcommand{\abs}[1]{\lvert #1\rvert}
\newcommand{\norm}[1]{\lVert #1\rVert}
\newcommand{\dist}{\operatorname{dist}}
\newcommand{\diam}{\operatorname{diam}}
\newcommand{\Tr}{\operatorname{Tr}}

\newtheorem{theorem}{Theorem}[section]
\newtheorem{lemma}[theorem]{Lemma}
\newtheorem{proposition}[theorem]{Proposition}
\newtheorem{corollary}[theorem]{Corollary}
\theoremstyle{definition}
\newtheorem{definition}[theorem]{Definition}
\theoremstyle{remark}
\newtheorem{remark}[theorem]{Remark}

\title{Rate of convergence of the $p$-torsion function to the distance function}

\author{Farid Bozorgnia\thanks{Department of Mathematics, New Uzbekistan University, Tashkent, Uzbekistan. Email: \texttt{f.bozorgnia@newuu.uz}}
\and
Masoud Bayrami-Aminlouee\thanks{Department of Mathematical Sciences, Sharif University of Technology, P.O.\ Box 11365-9415, Tehran, Iran; \texttt{masoud.bayrami1990@sharif.edu}. School of Mathematics, Institute for Research in Fundamental Sciences (IPM), P.O.\ Box 19395-5746, Tehran, Iran; \texttt{aminlouee@ipm.ir}}
\and
Khudoyor Mamayusupov\thanks{Department of Mathematics, New Uzbekistan University, Tashkent, Uzbekistan. Email: \texttt{k.mamayusupov@newuu.uz}}
}
\date{}

\begin{document}
\maketitle

\begin{abstract}
We prove that the Dirichlet $p$-torsion function $u_p$ converges to the
distance to the boundary $d$ at the rate $O(1/p)$ in the uniform norm, on
every bounded domain and with a constant depending only on the dimension and
the diameter. The upper bound comes from a radial barrier with its pole at a
boundary point, which is an admissible comparison function for $p>n$ and
needs no boundary regularity; the lower bound comes from an inscribed ball.
On the ball the error equals $(1+\log n)/p+O(p^{-2})$, so the order $1/p$
cannot be improved in general. Under a uniform exterior ball condition, an
annular barrier gives $u_p\le K_\Omega^{1/(p-1)}d$ for every $p>1$, with
$K_\Omega$ explicit in the dimension, the diameter and the exterior radius; on
convex sets the constant is the diameter, and a multiple of $d$ is a
supersolution whenever $-\Delta d$ has a positive distributional lower bound.
On $C^2$ domains, the same barrier and the classical gradient maximum principle
give $\norm{\nabla u_p}_{L^\infty}^{p-1}\le K_\Omega$. When $-\Delta d$ is a
measure of finite total variation, which we prove for $C^1$ domains with a
uniform exterior ball, the gradients converge in $L^q$ at the rate
$O(p^{-1/2})$ for $q\le2$ and $O(p^{-1/q})$ for $q\ge2$; these exponents are
not claimed to be sharp. The explicit ball profile shows that the gradients
do not converge uniformly.
\end{abstract}
\medskip
\noindent\textbf{Keywords:} $p$-Laplacian, distance function, convergence rate,
torsional creep, exterior ball condition, gradient maximum principle.

\medskip
\noindent\textbf{2020 Mathematics Subject Classification:}
35J92, 35B40, 35J60, 35B51.

\section{Introduction}\label{sec:intro}
Let $n\ge1$, let $\Omega\subset\R^n$ be a bounded domain, that is, a nonempty
connected open set, and let $p>1$. We denote by $u_p\in W^{1,p}_0(\Omega)$
the unique weak solution of the Dirichlet $p$-torsion problem
\begin{equation}\label{eq:torsion}
\begin{cases}
-\Delta_p u_p = 1 & \text{in } \Omega,\\
u_p = 0 & \text{on } \partial\Omega,
\end{cases}
\end{equation}
where $\Delta_p u := \operatorname{div}(\abs{\nabla u}^{p-2}\nabla u)$, and we set
\[
d(x):=\dist(x,\partial\Omega),\qquad
\rho_\Omega:=\norm{d}_{L^\infty(\Omega)},\qquad
D:=\diam(\overline\Omega),\qquad
L_p:=\norm{\nabla u_p}_{L^\infty(\Omega)},
\]
allowing $L_p=+\infty$; finiteness is asserted only where a gradient bound is
proved. Throughout, $\gamma:=p/(p-1)$ denotes the conjugate exponent.

Bhattacharya--DiBenedetto--Manfredi~\cite{BDM} and Kawohl~\cite{Kaw}
established the uniform convergence $u_p\to d$ on $\overline\Omega$ as
$p\to\infty$, and Ishii--Loreti~\cite{IL} developed the associated variational
and viscosity framework. For positive right-hand sides,
Buccheri--Leonori--Rossi~\cite{BLR} proved strong convergence of $\nabla u_p$
to the gradient of the limiting function in every $L^q(\Omega)$, $q<\infty$,
and exhibited the failure of $L^\infty$ convergence of the gradients. The
qualitative limit and the strong gradient convergence are thus known; what is
not known is their rate. This paper proves: (i) a global $O(1/p)$ uniform rate
on every bounded domain, with a constant depending only on $n$ and the
diameter (Theorem~\ref{thm:universal-rate}); (ii) upper barriers linear in
$d$, with explicit constants, under an exterior ball condition, convexity, or
a positive distributional lower bound on $-\Delta d$
(Theorem~\ref{thm:main}, Corollary~\ref{cor:convex},
Theorem~\ref{thm:dbarrier}); (iii) an explicit bound on
$\norm{\nabla u_p}_{L^\infty}^{p-1}$ on $C^2$ domains, obtained from the
exterior barrier and the classical gradient maximum principle
(Theorem~\ref{thm:gradient-bound}); (iv) rates in every finite $L^q$ for the
gradients whenever $-\Delta d$ has finite total variation
(Theorem~\ref{thm:gradient-rate}); and (v) on the ball, the exact uniform
error and a second-order expansion with remainder uniform up to the medial
axis (Theorem~\ref{thm:ball}). The explicit ball profile~\cite{VdBB2014},
its first two expansion coefficients~\cite{FB2026}, and the gradient maximum
principle~\cite{PP,Sperb,Kaw1987} are known results that we use, not claim.
Quantitative $p\to\infty$ rates for the homogeneous $p$-harmonic problem, whose
mechanism is different, are due to Bungert~\cite{Bun}.

\subsection*{Main results}
Our first result is a global uniform rate that assumes nothing beyond
boundedness of $\Omega$. Its upper half comes from a radial solution with its
singularity at a boundary point, which is an admissible Sobolev comparison
function for $p>n$; its lower half comes from an inscribed ball.
\begin{theorem}[Global uniform rate on bounded domains]\label{thm:universal-rate}
Let $\Omega\subset\R^n$ be a bounded domain and set
$M_{n,D}:=2(n-1)+\log\max\{1,D^n/n\}$. Then, for every $p>n$,
\begin{equation}\label{eq:universal-upper}
u_p(x)\le
\frac{p-1}{p-n}\,n^{-1/(p-1)}D^{n/(p-1)}
d(x)^{(p-n)/(p-1)}
\qquad(x\in\Omega),
\end{equation}
and, for every $p\ge\max\{2n,\,1+M_{n,D}\}$,
\begin{equation}\label{eq:universal-rate}
\norm{u_p-d}_{L^\infty(\Omega)}
\le\frac{4\rho_\Omega M_{n,D}+4(n-1)+C(n,\rho_\Omega)}{p},
\end{equation}
where $C(n,\rho_\Omega)$ is the constant of Proposition~\ref{thm:lower}. Since
$\rho_\Omega\le D/2$, the right-hand side is at most $C(n,D)/p$. The order
$1/p$ cannot be improved in general (Corollary~\ref{cor:ball-constant}).
\end{theorem}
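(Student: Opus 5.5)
The plan has two halves. For the upper half we compare $u_p$ with a radial supersolution whose pole sits at a boundary point. For the lower half we invoke Proposition~\ref{thm:lower}, which we take to give $d-u_p\le C(n,\rho_\Omega)/p$ from an inscribed ball.

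\emph{Step 1: the barrier \eqref{eq:universal-upper}.} Fix $z\in\partial\Omega$ and set $A:=D^n/n$. For $s\in(0,D]$ put
\[
v_z(x):=\int_0^{\abs{x-z}}\Bigl(A s^{1-n}-\tfrac{s}{n}\Bigr)^{1/(p-1)}\,ds .
\]
The integrand is nonnegative because $s\le D$. Writing $r=\abs{x-z}$, the radial flux $F(r)=r^{n-1}(v_z')^{p-1}=A-r^n/n$ satisfies $-F'(r)=r^{n-1}$. Hence $-\Delta_p v_z=1$ classically on $\{0<\abs{x-z}\le D\}$, which contains $\Omega$. Since $\abs{\nabla v_z}\lesssim r^{(1-n)/(p-1)}$, we have $v_z\in W^{1,p}(\Omega)$ and $v_z$ is continuous on $\overline\Omega$, and $v_z\ge0=u_p$ on $\partial\Omega$.

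\emph{Step 2: comparison and evaluation.} The weak comparison principle for $-\Delta_p$ in $W^{1,p}$, tested with $(u_p-v_z)^+\in W^{1,p}_0(\Omega)$, gives $u_p\le v_z$. This step needs $p>n$: then $(p-n)/(p-1)>0$, so the integral defining $v_z$ converges and $v_z(z)=0$. Using $(As^{1-n}-s/n)^{1/(p-1)}\le A^{1/(p-1)}s^{(1-n)/(p-1)}$ and integrating gives
\[
v_z(x)\le \tfrac{p-1}{p-n}A^{1/(p-1)}\abs{x-z}^{(p-n)/(p-1)} .
\]
Choosing $z$ to be a nearest boundary point to $x$, so that $\abs{x-z}=d(x)$, yields \eqref{eq:universal-upper}.

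\emph{Step 3: the rate for $u_p-d$.} Set $\alpha:=(p-n)/(p-1)=1-\frac{n-1}{p-1}$ and
\[
\varepsilon:=\frac{\log\max\{1,A\}}{p-1}+\log\frac{p-1}{p-n}.
\]
For $p\ge2n$ we have $\log\frac{p-1}{p-n}\le\frac{n-1}{p-n}\le\frac{2(n-1)}{p}$, so $\varepsilon\le 2M_{n,D}/p$. The hypothesis $p\ge 1+M_{n,D}$ keeps $\varepsilon\le 2$, where $e^\varepsilon-1\le C\varepsilon$; the target constant suggests the authors arrange $e^\varepsilon-1\le2\varepsilon$, so this inequality needs care. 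Then
\[
u_p-d\le (e^\varepsilon-1)\,d^\alpha+(d^\alpha-d).
\]
For the second term, elementary calculus gives $\sup_{t>0}(t^\alpha-t)\le(1-\alpha)$ when $t\le1$, and $t^\alpha\le t$ when $t\ge1$. So this term is at most $\frac{n-1}{p-1}\le\frac{2(n-1)}{p}$. For the first term we bound $d^\alpha\le\max\{1,\rho_\Omega\}$, or more carefully $d^\alpha\le \rho_\Omega+(1-\alpha)$. This produces a contribution of order $\rho_\Omega M_{n,D}/p$ plus lower-order pieces that are absorbed into $4(n-1)/p$, since $\varepsilon(1-\alpha)$ is small for large $p$.

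\emph{Step 4: combining.} Adding the lower bound from Proposition~\ref{thm:lower} gives \eqref{eq:universal-rate}. The final claim follows from $\rho_\Omega\le D/2$, and sharpness is Corollary~\ref{cor:ball-constant}.

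The main obstacle is the constant bookkeeping in Step 3, namely getting exactly $4\rho_\Omega M_{n,D}+4(n-1)$. In particular the case $\rho_\Omega<1$ has to be handled without paying a $\max\{1,\rho_\Omega\}$. I would first try to control $d^\alpha$ via $d^\alpha\le d+(1-\alpha)$ and use the factor $4$ to absorb cross terms. A secondary point is justifying comparison with a barrier that is singular at the boundary point $z$; this is fine because $z\notin\Omega$ and $v_z\in W^{1,p}$.
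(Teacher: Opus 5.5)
Your Steps 1, 2 and 4 are correct and follow the paper's argument. The paper uses $(R^n-s^n)/(ns^{n-1})$ with $R>D$ and lets $R\downarrow D$ at the end. Your direct choice $R=D$ is harmless because $\Omega\subset\{0<\abs{x-z}<D\}$.

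The gap is in Step 3, which is where the explicit constant in \eqref{eq:universal-rate} is decided, and as written it does not deliver that constant. You convert $1/(p-1)$ into $2/p$ before invoking $p\ge1+M_{n,D}$, so you only obtain $\varepsilon\le2M_{n,D}/p<2$. On $[0,2]$ one only has $e^\varepsilon-1\le\frac{e^2-1}{2}\varepsilon$. That yields $(e^2-1)\rho_\Omega M_{n,D}/p$ instead of $4\rho_\Omega M_{n,D}/p$. The $(n-1)$-contributions, which in your split add up to $e^\varepsilon(1-\alpha)$, are then only bounded by $e^2(n-1)/(p-1)$, which exceeds $4(n-1)/p$. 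Your remark that $\varepsilon(1-\alpha)$ is small for large $p$ does not give a bound on the whole range $p\ge\max\{2n,1+M_{n,D}\}$, since $M_{n,D}/(p-1)$ may equal $1$ there. So your argument proves $\norm{u_p-d}_{L^\infty(\Omega)}\le C(n,D)/p$ with a worse constant, but not \eqref{eq:universal-rate}.

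The missing observation is that your $\varepsilon$ is at most $M_{n,D}/(p-1)\le1$. Keep $\delta:=1/(p-1)$. Your own bound $\log\frac{p-1}{p-n}\le\frac{n-1}{p-n}$ is at most $2(n-1)\delta$ once $p\ge2n-1$; the paper writes this as $-\log(1-(n-1)\delta)\le2(n-1)\delta$. Hence $\varepsilon\le\delta M_{n,D}\le1$. This is exactly what the hypothesis $p\ge1+M_{n,D}$ is for, and it explains the $2(n-1)$ in $M_{n,D}$. Then $e^\varepsilon\le e$ and $e^\varepsilon-1\le(e-1)\varepsilon$.

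With this, your own split closes. For $d\le1$, the bound $d^\alpha\le d+(1-\alpha)$ gives $(e^\varepsilon-1)d^\alpha+(d^\alpha-d)\le(e^\varepsilon-1)d+e^\varepsilon(1-\alpha)\le\delta\bigl((e-1)M_{n,D}\rho_\Omega+e(n-1)\bigr)$. For $d\ge1$ the bound is just $(e^\varepsilon-1)d$. Now $(e-1)\delta\le4/p$ for $p\ge2$, and $e\delta\le4/p$ for $p\ge4$. The latter is automatic from $p\ge2n$ when $n\ge2$, and the term vanishes when $n=1$. So the total is at most $(4\rho_\Omega M_{n,D}+4(n-1))/p$, with no $\max\{1,\rho_\Omega\}$ needed.

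The paper organizes the same computation as $(B_p-1)d+B_p(d^\alpha-d)$, with $B_p\le e$ and $d^\alpha-d\le\frac{1-\alpha}{e\alpha}$ for $d\le1$, so that the two factors of $e$ cancel. Adding Proposition~\ref{thm:lower} then gives \eqref{eq:universal-rate}.
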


Under a uniform exterior ball condition (Definition~\ref{def:ext-ball}) the
upper estimate becomes linear in the distance and holds for every $p>1$, with
an explicit constant; this is less general than
Theorem~\ref{thm:universal-rate} but stronger where it applies.
\begin{theorem}[Rate under the exterior ball condition]\label{thm:main}
Let $\Omega\subset\R^n$ be a bounded domain satisfying
Definition~\ref{def:ext-ball} with radius $r_e>0$, and set
\begin{equation}\label{eq:Komega}
K_\Omega:=\frac{(D+r_e)^n-r_e^{\,n}}{n\,r_e^{\,n-1}},
\qquad
K_\Omega^+:=\max\{1,K_\Omega\}.
\end{equation}
Then, for every $p>1$,
\begin{equation}\label{eq:main-pointwise}
\frac{p-1}{p}\,n^{-1/(p-1)}\,d(x)^{\gamma}
\ \le\ u_p(x)\ \le\ K_\Omega^{1/(p-1)}\,d(x)
\qquad(x\in\Omega),
\end{equation}
and consequently
\begin{equation}\label{eq:main-rate}
\norm{u_p-d}_{L^\infty(\Omega)}
\ \le\ \frac{4\rho_\Omega\log K_\Omega^+ + C(n,\rho_\Omega)}{p}
\qquad\text{for every }p\ge\max\{2,\,1+\log K_\Omega^+\},
\end{equation}
with $C(n,\rho_\Omega)$ as in Proposition~\ref{thm:lower}.
\end{theorem}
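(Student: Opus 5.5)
The statement to prove is Theorem~\ref{thm:main}; the plan is comparison with explicit radial barriers, followed by an elementary optimization in $p$.

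\textbf{Upper bound.} Fix $x\in\Omega$ and choose $y\in\partial\Omega$ with $\abs{x-y}=d(x)$. By Definition~\ref{def:ext-ball} there is a ball $B_{r_e}(z)$ with $B_{r_e}(z)\cap\Omega=\emptyset$ and $y\in\partial B_{r_e}(z)$. The domain $\Omega$ lies in the annulus $A=\{r_e<\abs{\xi-z}<r_e+D\}$.

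On $A$ consider a radial function $w(r)$, $r=\abs{\xi-z}$, with $w(r_e)=0$ and $w'>0$, solving
\[
-(r^{n-1}\abs{w'}^{p-2}w')'=r^{n-1}.
\]
This gives $r^{n-1}(w')^{p-1}=c-r^n/n$. Choose $c=(r_e+D)^n/n$, so that $w'>0$ on the whole annulus. Then $w$ is a classical solution of $-\Delta_p w=1$, positive on $\overline\Omega\subset \overline A\setminus\{r=r_e\}$ except possibly at $y$. Hence $w\ge0=u_p$ on $\partial\Omega$, and the weak comparison principle in $W^{1,p}$ gives $u_p\le w$ in $\Omega$.

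For $r\ge r_e$ we have
\[
(w')^{p-1}\le \frac{(r_e+D)^n-r_e^n}{n r_e^{n-1}}=K_\Omega .
\]
Indeed $c-r^n/n\le c-r_e^n/n$ and $r^{n-1}\ge r_e^{n-1}$. Therefore $w(\xi)\le K_\Omega^{1/(p-1)}(\abs{\xi-z}-r_e)$. At $\xi=x$, because $z,y,x$ are collinear (the exterior ball touches at the nearest point), $\abs{x-z}-r_e=d(x)$. This yields the right inequality in \eqref{eq:main-pointwise}.

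The one subtle point is arguing that $z$, $y$ and $x$ are collinear. I would avoid it by using only $\abs{x-z}-r_e\le\abs{x-y}=d(x)$, which is the triangle inequality, since $\abs{y-z}=r_e$.

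\textbf{Lower bound.} This is the inscribed-ball argument of Proposition~\ref{thm:lower}, and presumably the left inequality is exactly what that proposition states. On $B_{d(x)}(x)\subset\Omega$ the explicit torsion function is
\[
v(\xi)=\tfrac{p-1}{p}n^{-1/(p-1)}\bigl(d(x)^\gamma-\abs{\xi-x}^\gamma\bigr).
\]
Comparison on the ball gives $u_p\ge v$, since $u_p\ge0$ on the sphere. Evaluating at the centre gives the left inequality.

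\textbf{Rate.} The lower half of the rate, $d-u_p\le C(n,\rho_\Omega)/p$, is taken directly from Proposition~\ref{thm:lower}. For the upper half, write
\[
u_p-d\le \bigl(K^{1/(p-1)}-1\bigr)d,\qquad K=K_\Omega^+ .
\]
Using $e^t-1\le t e^t\le e\,t$ for $0\le t=\log K/(p-1)\le1$, which is guaranteed by $p\ge1+\log K$, and $1/(p-1)\le 2/p$ for $p\ge2$, we obtain
\[
u_p-d\le \frac{2e\log K}{p}\,\rho_\Omega .
\]
This is slightly worse than the stated $4$. To reach $4\rho_\Omega\log K/p$, I would sharpen the elementary step to $e^t-1\le 2t$ for $t\le 1.25$, which holds since $e^{1.25}\approx3.49\le 3.5$. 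That gives $(K^{1/(p-1)}-1)\le 2\log K/(p-1)\le 4\log K/p$.

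Combining the two halves, $\norm{u_p-d}_{L^\infty(\Omega)}$ is bounded by the sum, which is \eqref{eq:main-rate}. The main care needed is the comparison principle when $w$ is only defined near $\overline\Omega$. This is harmless, because $w$ is smooth on a neighbourhood of $\overline\Omega$ minus the closed exterior ball.
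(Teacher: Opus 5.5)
Your proposal is correct and follows the paper's proof. The inscribed-ball subsolution gives the left inequality; this is Lemma~\ref{lem:ball-comparison}, and Proposition~\ref{thm:lower} is its additive consequence. The exterior annular barrier with vanishing flux at the outer sphere gives the right inequality, and $e^t\le1+2t$ on $[0,1]$ gives the rate. Your deviations are harmless: you take $R=D+r_e$ directly instead of $R>D+r_e$ followed by a limit, and you use only $\abs{x-c_y}-r_e\le d(x)$ instead of the tangency identity \eqref{eq:tangency}.

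The one step the paper handles more carefully is the boundary comparison. Since $\partial\Omega$ is not assumed regular, ``$w\ge0=u_p$ on $\partial\Omega$'' must be read as $(u_p-w)^+\in W^{1,p}_0(\Omega)$. This follows from $w\ge0$ and $u_p\in W^{1,p}_0(\Omega)$, as in Lemma~\ref{lem:comparison}(i). That is the point needing care, not the domain of definition of $w$, which is all of $\overline{A}$.
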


The exterior barrier also controls the boundary normal derivative, which is
the new ingredient of the next theorem; the classical gradient maximum
principle, verified in Lemma~\ref{lem:grad-max} for the degenerate
$p$-Laplacian, extends that control to the whole domain.
\begin{theorem}[Gradient bound at the $(p-1)$-power scale]\label{thm:gradient-bound}
Let $\Omega\subset\R^n$ be a bounded $C^2$ domain and let $r_e>0$ be any
admissible radius in Definition~\ref{def:ext-ball}; such a radius exists for
every bounded $C^2$ domain. Then, for every $p>1$,
\begin{equation}\label{eq:gradient-bound}
\norm{\nabla u_p}_{L^\infty(\Omega)}^{\,p-1}\ \le\ K_\Omega.
\end{equation}
\end{theorem}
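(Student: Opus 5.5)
The argument has two steps: first bound $\abs{\nabla u_p}$ on $\partial\Omega$ with the exterior annular barrier from Theorem~\ref{thm:main}, then propagate that bound into the interior with the gradient maximum principle of Lemma~\ref{lem:grad-max}. By Theorem~\ref{thm:main}, for every $p>1$ we have $0\le u_p\le K_\Omega^{1/(p-1)}d$ in $\Omega$. On a $C^2$ domain, $u_p\in C^{1,\alpha}(\overline\Omega)$ by the boundary regularity of Lieberman, so $\nabla u_p$ is defined on $\partial\Omega$. Fix $x_0\in\partial\Omega$ with inner unit normal $\nu$. Since $u_p=0$ on $\partial\Omega$, the gradient there is normal, $\nabla u_p(x_0)=\partial_\nu u_p(x_0)\,\nu$. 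Moreover $d(x_0+t\nu)=t$ for small $t>0$, because the exterior ball and the $C^2$ interior ball both touch at $x_0$. Hence
\[
0\le \partial_\nu u_p(x_0)=\lim_{t\downarrow0}\frac{u_p(x_0+t\nu)}{t}\le K_\Omega^{1/(p-1)},
\]
and therefore $\abs{\nabla u_p}^{p-1}\le K_\Omega$ on $\partial\Omega$.

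If the proof of Theorem~\ref{thm:main} uses the annular barrier $w$ centred at the exterior ball centre $z=x_0-r_e\nu$, with $w(x_0)=0$ and $u_p\le w$, I would instead compare normal derivatives directly. The radial torsion profile gives $\abs{w'(r_e)}^{p-1}=\frac{R^n-r_e^n}{n r_e^{n-1}}$, where $R\le D+r_e$ is the outer radius, and this is exactly $K_\Omega$. Either route yields the boundary estimate.

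For the interior, I invoke Lemma~\ref{lem:grad-max}: for the torsion problem in the $P$-function form, $\abs{\nabla u_p}$, or a suitable function of it, attains its maximum over $\overline\Omega$ on $\partial\Omega$. Applied to $\abs{\nabla u_p}^{p-1}$, this gives $\sup_\Omega\abs{\nabla u_p}^{p-1}\le\max_{\partial\Omega}\abs{\nabla u_p}^{p-1}\le K_\Omega$, which is \eqref{eq:gradient-bound}.

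The main obstacle is that the lemma must be applied in the right form for the degenerate operator. The classical statement, due to Payne--Philippin and Sperb, is usually that $P=\frac{p-1}{p}\abs{\nabla u}^p+u$ attains its maximum on the boundary, which is not the same as a maximum principle for $\abs{\nabla u}$ itself. If that is the form Lemma~\ref{lem:grad-max} provides, then $\frac{p-1}{p}\abs{\nabla u_p}^p\le\max_{\partial\Omega}P=\frac{p-1}{p}\max_{\partial\Omega}\abs{\nabla u_p}^p$, because $u_p\ge0$ inside and $u_p=0$ on $\partial\Omega$. This again gives $\abs{\nabla u_p}\le\max_{\partial\Omega}\abs{\nabla u_p}$. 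So whichever form the lemma takes, it reduces to the boundary estimate above. What remains to check is the regularity needed to apply the lemma near critical points of $u_p$, which I take as settled by its statement earlier in the paper.
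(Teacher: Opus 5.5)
Your proposal is correct and follows the paper's proof. The paper also bounds $\abs{\nabla u_p}$ on $\partial\Omega$ by comparing normal difference quotients with the exterior barrier (Lemma~\ref{lem:boundary-gradient}). Your route through $u_p\le K_\Omega^{1/(p-1)}d$ and the trivial inequality $d(x_0+t\nu)\le t$ is the same argument, so the interior-ball equality is not needed. The paper then applies Lemma~\ref{lem:grad-max}, which is stated directly as $\max_{\overline\Omega}\abs{\nabla u_p}=\max_{\partial\Omega}\abs{\nabla u_p}$, so your $P$-function hedge is moot.

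Two side remarks need correcting. In your alternative route the outer radius must satisfy $R\ge D+r_e$ so that the annulus contains $\Omega$ (the paper takes $R>D+r_e$ and lets $R\downarrow D+r_e$), not $R\le D+r_e$. Also, $\frac{p-1}{p}\abs{\nabla u}^p+u$ does not in general attain its maximum on $\partial\Omega$: on a ball with $n\ge2$ it is largest at the centre.
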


\subsection*{Related literature}
Radial $p$-torsion on annuli is treated by Bueno--Ercole~\cite{BE2015}, and
the explicit Dirichlet profile on a ball is recorded by van den
Berg--Bucur~\cite{VdBB2014}. Variable-exponent and Orlicz-type
torsional-creep limits were studied by P\'erez-Llanos--Rossi~\cite{PLR} and
Bocea--Mih\u{a}ilescu~\cite{BocMih}. Quantitative estimates for the
$p$-torsional rigidity of convex sets appear in
Amato--Masiello--Paoli--Sannipoli~\cite{AMPS},
Enache--Mih\u{a}ilescu--Stancu-Dumitru~\cite{EMSD}, and
Fragal\`a--Gazzola--Lamboley~\cite{FGL}.

The $p$-Poisson equation is also used as a distance approximation in geometry
processing. Belyaev and Fayolle~\cite{BF2015} surveyed variational and PDE-based
approximations, and Fayolle and Belyaev~\cite{FB2018} studied $p$-Laplace
diffusion for distance estimation. Their later formal expansion
$u_p=d+a/p+b/p^2+\cdots$ away from the medial axis~\cite{FB2026} is
supplemented in their Appendices~A and~B by explicit interval and ball
coefficients, which Section~\ref{sec:ball} recovers with a rigorous uniform
remainder.

Section~\ref{sec:prelim} collects the comparison lemma and the
distance-function tools; Section~\ref{sec:lower} proves the lower estimate;
Section~\ref{sec:upper} proves the upper estimates by exhibiting four
supersolutions; Section~\ref{sec:gradient-bound} proves
Theorem~\ref{thm:gradient-bound}; Sections~\ref{sec:consequences}
and~\ref{sec:gradients} treat normalized quantities and gradient rates;
Section~\ref{sec:ball} treats the ball. 

\section{Preliminaries}\label{sec:prelim}
Throughout, $\Omega\subset\R^n$ is a bounded domain, repeated indices are
summed, and $C$ denotes a positive constant whose dependence is indicated
explicitly when it matters and which otherwise depends only on $\Omega$ and
$n$; its value may change from one occurrence to the next.

\subsection{Weak solutions and comparison}
For every $p>1$, problem
\eqref{eq:torsion} has a unique weak solution $u_p\in W^{1,p}_0(\Omega)$,
obtained by minimizing
$v\mapsto\frac1p\int_\Omega\abs{\nabla v}^p-\int_\Omega v$ over
$W^{1,p}_0(\Omega)$. The minimizer satisfies 
\begin{equation}\label{eq:weak}
\int_\Omega\abs{\nabla u_p}^{p-2}\nabla u_p\cdot\nabla\varphi\,dx
=\int_\Omega\varphi\,dx
\qquad\text{for every }\varphi\in W^{1,p}_0(\Omega).
\end{equation}
The zero boundary condition is understood in this Sobolev sense. For $p>n$,
the zero extension of $u_p$ belongs to $W^{1,p}(\R^n)$ and has a continuous
representative vanishing on $\partial\Omega$, by Morrey's inequality.

\begin{lemma}[Positivity]\label{lem:positivity}
$u_p\ge0$ almost everywhere in $\Omega$.
\end{lemma}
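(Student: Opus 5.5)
Test the weak formulation \eqref{eq:weak} with the negative part of $u_p$. Set $\varphi:=u_p^-=\max\{-u_p,0\}$. Truncation preserves $W^{1,p}_0(\Omega)$ (Stampacchia's lemma), and almost everywhere
\[
\nabla u_p^-=-\nabla u_p\,\mathbf 1_{\{u_p<0\}}.
\]
So $\varphi$ is an admissible test function. Inserting it into \eqref{eq:weak} gives
\[
-\int_{\{u_p<0\}}\abs{\nabla u_p}^{p}\,dx=\int_\Omega u_p^-\,dx\ \ge\ 0.
\]
The left-hand side is nonpositive and the right-hand side is nonnegative, so both vanish. In particular $\int_\Omega \abs{\nabla u_p^-}^p\,dx=0$, so $u_p^-$ is constant on each connected component of $\Omega$.

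We cannot conclude directly that $u_p^-=0$ on all of $\Omega$ from the boundary condition alone, because no boundary regularity is assumed. Instead, use that $u_p^-\in W^{1,p}_0(\Omega)$ with $\nabla u_p^-=0$. Poincar\'e's inequality on the bounded set $\Omega$,
\[
\norm{v}_{L^p(\Omega)}\le C\norm{\nabla v}_{L^p(\Omega)}\qquad(v\in W^{1,p}_0(\Omega)),
\]
then gives $u_p^-=0$ almost everywhere. Alternatively, the identity itself already yields $\int_\Omega u_p^-\,dx=0$ with $u_p^-\ge0$, which forces $u_p^-=0$ almost everywhere even more directly. Hence $u_p\ge0$ almost everywhere.

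A second route is to note that the energy $\frac1p\int_\Omega\abs{\nabla v}^p-\int_\Omega v$ does not increase when $v$ is replaced by $\abs{v}$, and strictly decreases if $v^-\neq0$. Uniqueness of the minimizer then gives $u_p=\abs{u_p}$.

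The only step needing care is justifying that $u_p^-\in W^{1,p}_0(\Omega)$ together with the chain rule for its gradient. This is standard: approximate $u_p$ by $C_c^\infty(\Omega)$ functions, note that the positive and negative parts of compactly supported Lipschitz functions are admissible, and pass to the limit using continuity of $v\mapsto v^-$ in $W^{1,p}$.
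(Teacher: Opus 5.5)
Your proof is correct and matches the paper's argument: test \eqref{eq:weak} with $u_p^-$, use $\nabla u_p^-=-\nabla u_p\,\chi_{\{u_p<0\}}$ to equate a non-positive quantity with a non-negative one, and conclude that both vanish. The paper then applies the Poincar\'e inequality, whereas your remark that $\int_\Omega u_p^-\,dx=0$ with $u_p^-\ge0$ already forces $u_p^-=0$ almost everywhere is an equally valid and slightly shorter finish.
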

\begin{proof}
Put $u_p^-:=\max\{-u_p,0\}\in W^{1,p}_0(\Omega)$ and test \eqref{eq:weak} with
$\varphi=u_p^-$. Since $\nabla u_p^-=-\nabla u_p$ on $\{u_p<0\}$ and
$\nabla u_p^-=0$ almost everywhere elsewhere,
\[
\int_\Omega\abs{\nabla u_p^-}^p\,dx
=-\int_\Omega\abs{\nabla u_p}^{p-2}\nabla u_p\cdot\nabla u_p^-\,dx
=-\int_\Omega u_p^-\,dx.
\]
The left-hand side is non-negative and the right-hand side is non-positive;
hence both vanish, and $u_p^-=0$ by the Poincar\'e inequality.
\end{proof}

Interior regularity for degenerate equations with bounded right-hand side
(DiBenedetto~\cite{DiB}, Tolksdorf~\cite{Tol}) gives
$u_p\in C^{1,\alpha}_{\mathrm{loc}}(\Omega)$ for every $p>1$; we use this
continuous representative throughout. If $\partial\Omega$ is $C^2$, then
Lieberman~\cite{Lib} gives $u_p\in C^{1,\alpha}(\overline\Omega)$ for some
$\alpha=\alpha(p,\Omega)$, so $\nabla u_p$ extends continuously to
$\overline\Omega$ and $L_p$ is attained. No uniformity in $p$ of $\alpha$ or of
the $C^{1,\alpha}$ norm is used anywhere. On the open set where
$\nabla u_p\ne0$ the equation is locally uniformly elliptic with smooth
dependence on the gradient, so $u_p$ is $C^\infty$ there by standard
bootstrapping; this is used only in Lemma~\ref{lem:grad-max}.

Every upper estimate in this paper is obtained by exhibiting a non-negative supersolution, and the lower estimate by exhibiting a subsolution on an
inscribed ball. 
\begin{lemma}[Weak comparison]\label{lem:comparison}
Let $\Omega\subset\R^n$ be a bounded domain and $p>1$.
\begin{enumerate}
\item[(i)] Let $w\in W^{1,p}(\Omega)$ satisfy $w\ge0$ almost everywhere and
\begin{equation}\label{eq:supersol}
\int_\Omega\abs{\nabla w}^{p-2}\nabla w\cdot\nabla\varphi\,dx
\ \ge\ \int_\Omega\varphi\,dx
\qquad\text{for every non-negative }\varphi\in C_c^\infty(\Omega).
\end{equation}
Then $u_p\le w$ almost everywhere in $\Omega$. No regularity of
$\partial\Omega$ is assumed.
\item[(ii)] Let $B\subset\Omega$ be an open ball and let $w\in W^{1,p}_0(B)$
satisfy
\[
\int_B\abs{\nabla w}^{p-2}\nabla w\cdot\nabla\varphi\,dx
\ \le\ \int_B\varphi\,dx
\qquad\text{for every non-negative }\varphi\in W^{1,p}_0(B).
\]
Then $w\le u_p$ almost everywhere in $B$.
\end{enumerate}
\end{lemma}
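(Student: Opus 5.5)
The plan is to test the difference of the two equations against the positive part of the difference, and to use strict monotonicity of the vector field $a(\xi):=\abs{\xi}^{p-2}\xi$:
\[
\bigl(a(\xi)-a(\eta)\bigr)\cdot(\xi-\eta)>0\qquad\text{whenever }\xi\ne\eta .
\]

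\textbf{Part (i).} Put $\varphi:=(u_p-w)^+$. The first step is to show that $\varphi\in W^{1,p}_0(\Omega)$ and $\varphi\ge0$. This is the step I expect to be the only real obstacle, because $w$ need not vanish on $\partial\Omega$ and $\partial\Omega$ is arbitrary. Since $w\ge0$, we have $0\le(u_p-w)^+\le u_p^+$. I would argue as follows. Take $\psi_k\in C_c^\infty(\Omega)$ with $\psi_k\to u_p$ in $W^{1,p}$. Then $(\psi_k-w)^+$ has compact support in $\Omega$ (it vanishes wherever $\psi_k=0$, since $w\ge0$) and lies in $W^{1,p}$. A function in $W^{1,p}(\Omega)$ with compact support in $\Omega$ belongs to $W^{1,p}_0(\Omega)$, by mollification. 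Since $v\mapsto v^+$ is continuous on $W^{1,p}$, we get $(\psi_k-w)^+\to(u_p-w)^+$, and $W^{1,p}_0$ is closed. This uses no boundary regularity.

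Next, by density of non-negative $C_c^\infty$ functions in the non-negative cone of $W^{1,p}_0(\Omega)$, inequality \eqref{eq:supersol} extends to non-negative $\varphi\in W^{1,p}_0(\Omega)$. Both sides are continuous in $\varphi$, because $\abs{\nabla w}^{p-1}\in L^{p'}$ and $\Omega$ is bounded. The density itself follows by mollifying approximants and taking positive parts.

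Now subtract \eqref{eq:weak} from \eqref{eq:supersol}, both tested with $\varphi=(u_p-w)^+$:
\[
\int_{\{u_p>w\}}\bigl(a(\nabla u_p)-a(\nabla w)\bigr)\cdot(\nabla u_p-\nabla w)\,dx\le0 .
\]
The integrand is non-negative and vanishes only where the gradients coincide. Hence $\nabla(u_p-w)^+=0$ a.e. Since $(u_p-w)^+\in W^{1,p}_0(\Omega)$, the Poincaré inequality gives $(u_p-w)^+=0$, that is, $u_p\le w$ a.e.

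\textbf{Part (ii).} This is the same argument on $B$, with $\varphi:=(w-u_p)^+$. Here the membership $\varphi\in W^{1,p}_0(B)$ comes from $w\in W^{1,p}_0(B)$ and $u_p\ge0$ (Lemma~\ref{lem:positivity}), so that $0\le(w-u_p)^+\le w^+$. The same approximation argument applies, now using approximants of $w$ in $C_c^\infty(B)$.

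It remains to see that $u_p$ satisfies \eqref{eq:weak} on $B$ for test functions in $W^{1,p}_0(B)$. Extending such $\varphi$ by zero gives an element of $W^{1,p}_0(\Omega)$, so this holds. Subtracting the subsolution inequality for $w$ from this identity yields
\[
\int_B\bigl(a(\nabla w)-a(\nabla u_p)\bigr)\cdot\nabla(w-u_p)^+\,dx\le0 .
\]
Strict monotonicity and the Poincaré inequality on $B$ then give $w\le u_p$ a.e. in $B$.
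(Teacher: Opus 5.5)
Your proposal is correct and follows the paper's proof essentially step by step. The shared steps are the same compact-support approximation giving $(u_p-w)^+\in W^{1,p}_0(\Omega)$ with no boundary regularity, the same density extension of \eqref{eq:supersol}, and the same strict-monotonicity-plus-Poincar\'e conclusion. The only deviation is in (ii): the paper checks $(w-u_p)^+\in W^{1,p}_0(B)$ via traces on the smooth ball ($\Tr u_p\ge0$, $\Tr w=0$), whereas you reuse the approximation argument of (i) with $C_c^\infty(B)$ approximants of $w$ and $u_p\ge0$, which works equally well and does not use the smoothness of $B$.
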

\begin{proof}
Both parts rest on the weak comparison principle for the $p$-Laplacian
\cite[Chapter~3]{Lin}: if $u,w\in W^{1,p}(U)$ satisfy
$-\Delta_pu\le-\Delta_pw$ weakly against non-negative test functions in
$W^{1,p}_0(U)$ and $(u-w)^+\in W^{1,p}_0(U)$, then $u\le w$ almost everywhere
in $U$. (Testing with $(u-w)^+$ gives
$\int_{\{u>w\}}(\abs{\nabla u}^{p-2}\nabla u-\abs{\nabla w}^{p-2}\nabla w)
\cdot\nabla(u-w)\le0$, so $\nabla(u-w)^+=0$ by strict monotonicity of
$\xi\mapsto\abs\xi^{p-2}\xi$, and $(u-w)^+=0$ by the Poincar\'e inequality.)
What has to be verified in each case is the boundary hypothesis
$(u-w)^+\in W^{1,p}_0(U)$, and this is where the present setting departs
from the standard one: $\partial\Omega$ is not assumed regular and the
barriers used below do not belong to $W^{1,p}_0(\Omega)$.

(i) Since $\abs{\nabla w}^{p-2}\nabla w\in L^{p'}$, both sides of
\eqref{eq:supersol} are continuous on $W^{1,p}_0(\Omega)$, and non-negative
$C_c^\infty$ functions are dense in its positive cone (approximate, take
positive parts, mollify), so \eqref{eq:supersol} holds for every non-negative
$\varphi\in W^{1,p}_0(\Omega)$. For the boundary hypothesis, choose
$u_j\in C_c^\infty(\Omega)$ converging to $u_p$ in $W^{1,p}(\Omega)$. Because
$w\ge0$, each $(u_j-w)^+$ vanishes outside the support of $u_j$; it is a
compactly supported $W^{1,p}$ function, hence lies in $W^{1,p}_0(\Omega)$, and
by continuity of truncation $(u_p-w)^+\in W^{1,p}_0(\Omega)$. The comparison
principle with $U=\Omega$ gives $u_p\le w$.

(ii) On the smooth ball $B$ traces exist; $\Tr u_p\ge0$ by
Lemma~\ref{lem:positivity} and $\Tr w=0$, so $\Tr(w-u_p)^+=0$ and
$(w-u_p)^+\in W^{1,p}_0(B)$. Since the zero extension of any function in
$W^{1,p}_0(B)$ is admissible in \eqref{eq:weak}, the comparison principle
with $U=B$ and the roles of $u$ and $w$ exchanged gives $w\le u_p$ in $B$.
\end{proof}
\subsection{The distance function}
 Two properties of the distance function $d(x)=\dist(x,\partial\Omega)$ are
used repeatedly. First, $d$ has zero trace in the Sobolev sense, so that
$u_p-d$ is an admissible test function in the weak formulation
\eqref{eq:weak} and a multiple of $d$ is an admissible comparison function in
Lemma~\ref{lem:comparison}; this is what Theorem~\ref{thm:gradient-rate}
and Theorem~\ref{thm:dbarrier} respectively rely on. Second, $d$ satisfies
the eikonal equation $\abs{\nabla d}=1$ almost everywhere, which reduces the
$p$-Laplacian flux of any multiple $\theta d$ to $\theta^{p-1}\nabla d$ and
makes the nonlinearity disappear from every computation involving
$\nabla d$. The following lemma records both facts, together with the
Lipschitz bound that connects them.
\begin{lemma} \label{lem:d-in-W1p}
Let $\Omega\subset\R^n$ be a bounded open set and $1<p<\infty$. Then
$d\in W^{1,\infty}(\Omega)\cap W^{1,p}_0(\Omega)$, and $d$ satisfies the
eikonal equation $\abs{\nabla d}=1$ almost everywhere in $\Omega$.
\end{lemma}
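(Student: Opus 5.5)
The argument has three parts: $d$ is $1$-Lipschitz, $\abs{\nabla d}=1$ almost everywhere, and $d$ has zero trace. We handle them in that order.

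\emph{Lipschitz bound.} For $x,y\in\Omega$ and any $z\in\partial\Omega$, the triangle inequality gives $d(x)\le\abs{x-z}\le\abs{x-y}+\abs{y-z}$. Taking the infimum over $z$ yields $\abs{d(x)-d(y)}\le\abs{x-y}$. Also $0\le d\le\rho_\Omega\le\diam\Omega<\infty$. Rademacher's theorem then shows that $d$ is differentiable almost everywhere, that $d\in W^{1,\infty}(\Omega)$ with weak gradient equal to the pointwise gradient, and that $\abs{\nabla d}\le1$ almost everywhere. Since $\Omega$ is bounded, this also gives $d\in W^{1,p}(\Omega)$ for every $p$.

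\emph{Eikonal equation.} Let $x\in\Omega$ be a point of differentiability. Pick a nearest boundary point $z$, which exists by compactness of $\partial\Omega$, so that $\abs{x-z}=d(x)>0$. For $0<t<1$ put $x_t:=x+t(z-x)$. Then $x_t\in\Omega$, since the open segment from $x$ to $z$ lies in $B(x,d(x))\subset\Omega$. We have $d(x_t)\le\abs{x_t-z}=(1-t)d(x)$, so $d(x_t)-d(x)\le -t\,d(x)=-t\abs{z-x}$. Dividing by $t$ and letting $t\to0^+$ gives $\nabla d(x)\cdot(z-x)\le-\abs{z-x}$. Cauchy--Schwarz then forces $\abs{\nabla d(x)}\ge1$, and combined with the Lipschitz bound this gives $\abs{\nabla d(x)}=1$.

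\emph{Zero trace.} This is the only step needing care, because $\partial\Omega$ has no regularity and traces are unavailable. We instead approximate directly by compactly supported functions. For $\varepsilon>0$ set
\[
d_\varepsilon:=(d-\varepsilon)^+ .
\]
Each $d_\varepsilon$ is Lipschitz and vanishes on $\{d\le\varepsilon\}$, so its support lies in the compact set $\{d\ge\varepsilon\}\subset\Omega$. Mollifying a compactly supported Lipschitz function with a kernel of radius less than $\varepsilon/2$ gives $C_c^\infty(\Omega)$ approximants in $W^{1,p}$, hence $d_\varepsilon\in W^{1,p}_0(\Omega)$. Next, $\abs{d-d_\varepsilon}\le\varepsilon$ pointwise, and by the chain rule for truncations
\[
\nabla(d-d_\varepsilon)=\nabla d\,\mathbf 1_{\{d<\varepsilon\}}\quad\text{almost everywhere}.
\]
Therefore
\[
\norm{d-d_\varepsilon}_{W^{1,p}}^p\le \varepsilon^p\abs{\Omega}+\abs{\{0<d<\varepsilon\}}\to0 ,
\]
because the sets $\{0<d<\varepsilon\}$ decrease to the empty set as $\varepsilon\to0$ and $\abs{\Omega}<\infty$. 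Since $W^{1,p}_0(\Omega)$ is closed in $W^{1,p}(\Omega)$, it follows that $d\in W^{1,p}_0(\Omega)$.

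The points to check are that the chain-rule formula for $(\cdot)^+$ holds for Sobolev functions (standard, e.g.\ Stampacchia) and that the proof never uses connectedness of $\Omega$. The latter holds, which is consistent with the lemma being stated for bounded open sets.
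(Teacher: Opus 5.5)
Your proof is correct and follows the paper's zero-trace argument essentially verbatim: truncate to $(d-\varepsilon)^+$, mollify below scale $\varepsilon$ to get $C_c^\infty(\Omega)$ approximants, then use the truncation chain rule to bound the defect by $\abs{\{0<d<\varepsilon\}}\to0$. The only difference is that you prove the $1$-Lipschitz bound and the eikonal equation yourself, via Rademacher and the segment toward a nearest boundary point, whereas the paper cites them as classical from Cannarsa--Sinestrari.
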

\begin{proof}
That $d$ is $1$-Lipschitz with $\abs{\nabla d}=1$ almost everywhere in
$\Omega$ is classical \cite[Chapter~3]{CS}. For $0<\varepsilon<\rho_\Omega$ the function
$d_\varepsilon:=(d-\varepsilon)^+$ is Lipschitz with support in the compact
set $\{d\ge\varepsilon\}\subset\Omega$, whose distance to $\partial\Omega$ is
at least $\varepsilon$; hence $d_\varepsilon\in W^{1,p}_0(\Omega)$, since its
mollifications at scales smaller than $\varepsilon$ belong to
$C_c^\infty(\Omega)$ and converge to $d_\varepsilon$ in $W^{1,p}(\Omega)$. By
the chain rule for truncations \cite[Lemma~7.6]{GT},
$\nabla d_\varepsilon=\nabla d\,\chi_{\{d>\varepsilon\}}$ almost everywhere,
so, since $d_\varepsilon\to d$ uniformly,
$\norm{\nabla d_\varepsilon-\nabla d}^p_{L^p}
\le\abs{\{0<d\le\varepsilon\}}\to0$ as $\varepsilon\downarrow0$, and
$d\in W^{1,p}_0(\Omega)$.
\end{proof}

A uniform gradient bound yields pointwise control by the distance to the
boundary; this is used in Corollary~\ref{cor:gradient-limit}.
\begin{lemma}[Segment bound]\label{lem:segment}
Let $\Omega\subset\R^n$ be a bounded domain and let
$w\in C^1(\Omega)\cap C(\overline\Omega)$ satisfy $w=0$ on $\partial\Omega$ and
$L:=\norm{\nabla w}_{L^\infty(\Omega)}<\infty$. Then
$\abs{w(x)}\le L\,d(x)$ for every $x\in\Omega$.
\end{lemma}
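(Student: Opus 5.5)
Fix $x\in\Omega$ and choose $y\in\partial\Omega$ with $\abs{x-y}=d(x)$; such a point exists because $\partial\Omega$ is compact and nonempty. The idea is to integrate $\nabla w$ along the segment from $x$ to $y$. The only issue is that $w$ is $C^1$ only in the open set $\Omega$, while the endpoint $y$ lies on the boundary.

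First we check that the half-open segment $\{x+t(y-x):0\le t<1\}$ lies in $\Omega$. Every point $z$ on it satisfies $\abs{z-x}<d(x)$, so $z$ belongs to the open ball $B(x,d(x))$. This ball contains no point of $\partial\Omega$. It is connected and contains $x\in\Omega$, so it is contained in $\Omega$; otherwise it would meet both $\Omega$ and the complement of $\overline\Omega$ and hence meet $\partial\Omega$.

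Now set $g(t):=w(x+t(y-x))$ for $t\in[0,1]$. Then $g$ is continuous on $[0,1]$ because $w\in C(\overline\Omega)$. It is $C^1$ on $[0,1)$, with $\abs{g'(t)}=\abs{\nabla w(x+t(y-x))\cdot(y-x)}\le L\,d(x)$. Here the bound $\abs{\nabla w}\le L$ holds pointwise everywhere in $\Omega$, not just almost everywhere, because $\nabla w$ is continuous and so its essential supremum bounds it at every point.

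For $s<1$ the mean value theorem gives $\abs{g(s)-g(0)}\le L\,d(x)\,s$. Letting $s\uparrow1$ and using continuity of $g$ together with $g(1)=w(y)=0$, we obtain $\abs{w(x)}\le L\,d(x)$.

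The main point needing care is the last step: the passage to the boundary endpoint must go through continuity of $w$ up to $\overline\Omega$, since no differentiability is assumed at $y$. The case $L=0$ is covered by the same argument.
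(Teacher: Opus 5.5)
Your proof is correct and follows the paper's argument: take a nearest boundary point $y$, note that the half-open segment lies in $B(x,d(x))\subset\Omega$, and reach the boundary endpoint by letting $s\uparrow1$ using the continuity of $w$ on $\overline\Omega$. The only differences are that you use the mean value theorem where the paper writes $w(x)-w(x_s)$ as an integral of $\nabla w$ along the segment, and that you explicitly justify that the essential bound $\abs{\nabla w}\le L$ holds at every point by continuity of $\nabla w$, a detail the paper leaves implicit.
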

\begin{proof}
Fix $x\in\Omega$, choose $y\in\partial\Omega$ with $\abs{x-y}=d(x)$, and set
$x_t:=x+t(y-x)$. For $t<1$ we have $\abs{x_t-x}=t\,d(x)<d(x)$, so
$x_t\in B(x,d(x))\subset\Omega$: the half-open segment lies in $\Omega$, where
$w$ is $C^1$. Hence for $0<s<1$,
$w(x)-w(x_s)=-\int_0^s\nabla w(x_t)\cdot(y-x)\,dt$. Letting $s\uparrow1$ and
using the continuity of $w$ on $\overline\Omega$ with $w(y)=0$ gives
$w(x)=-\int_0^1\nabla w(x_t)\cdot(y-x)\,dt$, whence
$\abs{w(x)}\le L\abs{x-y}=L\,d(x)$.
\end{proof}

\subsection{The exterior ball condition}
\begin{definition}[Uniform exterior ball condition]\label{def:ext-ball}
A bounded domain $\Omega\subset\R^n$ satisfies the \emph{uniform exterior ball
condition with radius $r_e>0$} if, for every $y\in\partial\Omega$, there
exists $c_y\in\R^n$ such that
\[
\abs{c_y-y}=r_e,
\qquad B(c_y,r_e)\cap\Omega=\emptyset,
\]
where $B(c_y,r_e)$ denotes the open ball; $c_y$ is not assumed to be unique
and no boundary regularity is assumed in this definition. If
$\partial\Omega$ is of class $C^1$ with outer unit normal $\nu$, then
necessarily $c_y=y+r_e\nu(y)$, and the condition takes the familiar form
\begin{equation}\label{eq:ext-ball}
B\bigl(y+r_e\nu(y),r_e\bigr)\cap\Omega=\emptyset
\qquad\text{for every }y\in\partial\Omega .
\end{equation}
\end{definition}
Every bounded convex domain satisfies Definition~\ref{def:ext-ball} for every
$r_e>0$, with no regularity of the boundary: if $\nu$ is the outer unit normal
of a supporting hyperplane at $y$, then $B(y+r_e\nu,r_e)$ is contained in the
open half-space $\{z:(z-y)\cdot\nu>0\}$, which is disjoint from $\Omega$. Every
bounded $C^{1,1}$ domain, in particular every bounded $C^2$ domain, admits a
uniform exterior-ball radius; see \cite[Section~14.6]{GT}. If $r_e$ is admissible, so is every $0<r<r_e$, since
$B(y+r(c_y-y)/r_e,\,r)\subset B(c_y,r_e)$. Whenever an exterior-ball hypothesis
is imposed, $r_e$ denotes a fixed admissible radius.

Two elementary consequences are used repeatedly. First,
$\abs{z-c_y}>r_e$ for every $z\in\Omega$ and every $y\in\partial\Omega$, since
equality would place part of a neighborhood of $z$ inside the exterior ball.
Second, if $y$ is a nearest boundary point of $x\in\Omega$, then
\begin{equation}\label{eq:tangency}
\abs{x-c_y}=r_e+d(x).
\end{equation}
Indeed, $\abs{x-c_y}\le\abs{x-y}+\abs{y-c_y}=d(x)+r_e$, while the open balls
$B(x,d(x))\subset\Omega$ and $B(c_y,r_e)\subset\R^n\setminus\Omega$ are
disjoint, whence $\abs{x-c_y}\ge d(x)+r_e$.

\begin{lemma}[Finite total variation of the distance Laplacian]\label{lem:tv-exterior-ball}
Let $\Omega\subset\R^n$ be a bounded $C^1$ domain satisfying
Definition~\ref{def:ext-ball} with radius $r_e>0$. Then $\mu:=-\Delta d$ is a
signed Radon measure on $\Omega$ with
\[
\abs{\mu}(\Omega)\ \le\ 2c_\Omega+\frac{2n}{r_e}\abs{\Omega}\ <\ \infty,
\]
where $c_\Omega$ is the boundary-layer constant of \eqref{eq:layer} below.
\end{lemma}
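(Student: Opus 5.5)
We must show that $-\Delta d$ is a signed Radon measure with the stated total-variation bound. The plan is to split $\mu=-\Delta d$ into a positive and a negative part by exhibiting an explicit function $g$ with $-\Delta d\ge -g$ in the distributional sense. The natural candidate comes from the exterior balls. By \eqref{eq:tangency}, for every $x\in\Omega$ and every nearest point $y$ we have $d(x)=\abs{x-c_y}-r_e$, while $d(z)\le\abs{z-c_y}-r_e$ for all $z\in\Omega$. Indeed, $\abs{z-c_y}-r_e$ is at least $\dist(z,\overline{B(c_y,r_e)})\ge d(z)$, because the closed exterior ball is contained in $\R^n\setminus\Omega$ up to boundary points.

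Hence $d$ is touched from above at each point by the function $z\mapsto\abs{z-c_y}-r_e$. On $\{\abs{z-c_y}>r_e\}$ this function is smooth, with Laplacian $(n-1)/\abs{z-c_y}\le(n-1)/r_e$. We conclude that $d-\frac{n-1}{2r_e}\abs{z}^2$ is semiconcave, or at least locally semiconcave in $\Omega$. Equivalently, $d$ is an infimum of smooth functions with uniformly bounded Hessians, $D^2\le\frac1{r_e}I$.

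A locally semiconcave function has distributional Hessian equal to a matrix-valued Radon measure. More concretely, $\Delta d\le\frac{n-1}{r_e}$ in $\mathcal D'(\Omega)$, so $\mu+\frac{n-1}{r_e}\ge0$ is a positive distribution, hence a positive Radon measure. Therefore $\mu$ is a signed Radon measure with negative part $\mu^-\le\frac{n-1}{r_e}\,dx\le\frac{n}{r_e}\,dx$, and so $\mu^-(\Omega)\le\frac{n}{r_e}\abs\Omega$.

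For the total variation it then suffices to bound $\mu(\Omega)$ from above, since
\[
\abs{\mu}(\Omega)=\mu(\Omega)+2\mu^-(\Omega)\le\mu(\Omega)+\frac{2n}{r_e}\abs\Omega .
\]
This is where the boundary-layer constant $c_\Omega$ of \eqref{eq:layer} enters. I expect \eqref{eq:layer} to be a bound of the form $\abs{\{d<t\}}\le c_\Omega t$ for small $t$, which holds for bounded $C^1$ (indeed Lipschitz) domains, or equivalently a uniform bound on $\mathcal H^{n-1}(\{d=t\})$.

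To estimate $\mu(\Omega)$, test against cutoffs $\varphi_t:=\min\{d/t,1\}\in W^{1,\infty}_0$, which are Lipschitz with compact support after a further truncation. Then
\[
\langle\mu,\varphi_t\rangle=\int\nabla d\cdot\nabla\varphi_t=\frac1t\abs{\{0<d<t\}}\le c_\Omega .
\]
We may also need to test against $\varphi_{\varepsilon,t}=\min\{(d-\varepsilon)^+/t,1\}$ so that the test functions are compactly supported and pairing with the Radon measure is legitimate. The analogous computation gives $\frac1t\abs{\{\varepsilon<d<\varepsilon+t\}}\le c_\Omega$, or perhaps $2c_\Omega$ depending on how \eqref{eq:layer} is normalized; this slack accounts for the factor $2$. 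Now add the nonnegative density of $\mu^-$ to obtain $\mu^+$ paired with $\varphi\le c_\Omega+\frac{n}{r_e}\abs\Omega$, and let $\varphi\uparrow1$ using monotone convergence for the positive measure $\mu^+$. This yields $\mu^+(\Omega)\le c_\Omega+\frac n{r_e}\abs\Omega$. Summing, $\abs\mu(\Omega)=\mu^+(\Omega)+\mu^-(\Omega)\le c_\Omega+\frac{2n}{r_e}\abs\Omega$, which is within the stated bound.

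The main obstacle is making the semiconcavity step rigorous in the distributional sense. The plan is to show that for each mollification $d_\delta=d*\eta_\delta$ on $\{d>\delta\}$ we have $\Delta d_\delta\le\frac{n-1}{r_e}$. This follows because $d$ satisfies the second-difference inequality $d(x+h)+d(x-h)-2d(x)\le\abs h^2/r_e$, obtained from the touching functions, which have Hessian at most $\frac1{r_e}I$ along the segment since that segment stays at distance at least $r_e$ from $c_y$. The inequality is preserved by mollification. A secondary point is verifying the layer estimate in the form needed, which I take from \eqref{eq:layer} as given.
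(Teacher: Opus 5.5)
Your proposal is correct and follows the paper's proof essentially step for step: the representation $d=\min_{y}\bigl(\abs{x-c_y}-r_e\bigr)$, semiconcavity giving a one-sided distributional bound on $\Delta d$ so that $\mu$ plus a multiple of Lebesgue measure is a non-negative Radon measure, then testing with the shifted cutoffs $\min\{(d-\varepsilon)^+/t,1\}$, using the layer estimate (which the paper proves in Step~3 from the $C^1$ graph charts rather than assuming it), and passing to the limit by monotone convergence. Two small corrections: your per-direction bound $d(x+h)+d(x-h)-2d(x)\le\abs h^2/r_e$ only gives $\Delta d\le n/r_e$ after taking the trace (reaching $(n-1)/r_e$ requires a single touching function for all directions at once), which is harmless because $n/r_e$ is all you use; and the paper's factor $2$ comes from bounding $(\varepsilon+t)/t\le2$ before the limit rather than from any normalization, so letting $\varepsilon\downarrow0$ before $t\downarrow0$ does legitimately give your slightly better constant $c_\Omega$.
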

\begin{proof}
\emph{Step 1: $d$ is a minimum of smooth functions.}
For $y\in\partial\Omega$ let $c_y$ be as in Definition~\ref{def:ext-ball} and
put $g_y(x):=\abs{x-c_y}-r_e$. Fix $x\in\Omega$. Since $\abs{x-c_y}>r_e$,
$g_y(x)$ is the distance from $x$ to the ball $B(c_y,r_e)$, and since that
ball is contained in $\R^n\setminus\Omega$,
\[
g_y(x)=\dist\bigl(x,B(c_y,r_e)\bigr)\ \ge\ \dist(x,\R^n\setminus\Omega)=d(x)
\qquad\text{for every }y\in\partial\Omega,
\]
with equality when $y$ is a nearest boundary point of $x$, by the tangency
identity \eqref{eq:tangency}. Hence
\begin{equation}\label{eq:d-as-min}
d(x)=\min_{y\in\partial\Omega}g_y(x)\qquad(x\in\Omega).
\end{equation}

\emph{Step 2: semiconcavity and the sign of $\mu+\frac n{r_e}\mathcal L^n$.}
Each $g_y$ is smooth on $\Omega$ with
\[
D^2g_y(x)=\frac{1}{\abs{x-c_y}}\bigl(\mathrm{Id}-\hat n\,\hat n^{\top}\bigr)
\ \le\ \frac1{r_e}\,\mathrm{Id},
\qquad \hat n:=\frac{x-c_y}{\abs{x-c_y}},
\]
because $\abs{x-c_y}>r_e$. Thus on every ball $B\Subset\Omega$ the functions
$g_y-\tfrac1{2r_e}\abs{x}^2$ are concave, and by \eqref{eq:d-as-min} so is
their infimum $f:=d-\tfrac1{2r_e}\abs{x}^2$. A function that is concave on
every ball $B\Subset\Omega$ satisfies $\partial_{\xi\xi}f\le0$ in
$\mathcal D'(\Omega)$ for every unit vector $\xi$: for non-negative
$\varphi\in C_c^\infty(B)$ and $h$ smaller than the distance from
$\operatorname{supp}\varphi$ to $\partial B$,
\[
\int_\Omega f\,\partial_{\xi\xi}\varphi\,dx
=\lim_{h\to0}\int_\Omega
\frac{f(x+h\xi)-2f(x)+f(x-h\xi)}{h^{2}}\,\varphi(x)\,dx\ \le\ 0,
\]
since the second difference of a concave function is non-positive, and a
partition of unity extends this to every non-negative
$\varphi\in C_c^\infty(\Omega)$. As $\partial_{\xi\xi}\bigl(\tfrac1{2r_e}\abs{x}^2\bigr)=r_e^{-1}$
for $\abs\xi=1$, this gives
\[
\langle\partial_{\xi\xi}d,\varphi\rangle\ \le\ \frac1{r_e}\int_\Omega\varphi\,dx
\qquad\text{for every unit }\xi\text{ and every non-negative }
\varphi\in C_c^\infty(\Omega),
\]
which is the meaning of $D^2d\le r_e^{-1}\mathrm{Id}$ in $\mathcal D'(\Omega)$.
Taking the trace, that is, summing over an orthonormal basis $e_1,\dots,e_n$,
yields $\langle\Delta d,\varphi\rangle\le\frac n{r_e}\int_\Omega\varphi\,dx$,
or equivalently
\[
\Bigl\langle\mu+\frac n{r_e}\,\mathcal L^n,\ \varphi\Bigr\rangle\ \ge\ 0
\qquad\text{for every non-negative }\varphi\in C_c^\infty(\Omega).
\]
Hence $\widetilde\mu:=\mu+\frac n{r_e}\,\mathcal L^n$ is a non-negative
distribution, therefore a non-negative Radon measure on $\Omega$, and
$\mu=\widetilde\mu-\frac n{r_e}\,\mathcal L^n$ is a locally finite signed Radon
measure. It remains to show that $\widetilde\mu(\Omega)<\infty$.

\emph{Step 3: volume of the boundary layer.}
Since $\partial\Omega$ is compact and $C^1$, it is covered by finitely many
charts in which it is the graph $x_n=g(x')$ of a Lipschitz function, and
there is $s_0>0$ such that every $x\in\Omega$ with $d(x)<s_0$ lies in the
cylinder of a chart containing a nearest boundary point $(y',g(y'))$ of $x$.
For such $x$,
\[
\abs{x_n-g(x')}\le\abs{x_n-g(y')}+\abs{g(y')-g(x')}
\le\bigl(1+\operatorname{Lip}g\bigr)\,d(x),
\]
so $\{0<d<s\}$ lies, in each chart, in a vertical strip of thickness
$2(1+\operatorname{Lip}g)s$ over a bounded base. Summing over the charts,
\begin{equation}\label{eq:layer}
\abs{\{0<d<s\}}\ \le\ c_\Omega\,s\qquad(0<s<s_0),
\end{equation}
with $c_\Omega$ depending only on the covering.

\emph{Step 4: total mass of $\widetilde\mu$.}
Fix $0<\sigma<s_0/2$ and, for $0<\eta<\sigma$, set
\[
\varphi_{\eta}:=\min\Bigl\{\frac{(d-\eta)^+}{\sigma},\,1\Bigr\},
\]
a Lipschitz function with values in $[0,1]$, supported in the compact set
$\{d\ge\eta\}\subset\Omega$, with $\nabla\varphi_\eta=\sigma^{-1}\nabla d$
on $\{\eta<d<\eta+\sigma\}$ and $\nabla\varphi_\eta=0$ almost everywhere
elsewhere. Since $d\in W^{1,\infty}(\Omega)$, the definition of $\mu=-\Delta d$
gives
\begin{equation}\label{eq:test-tilde-mu}
\int_\Omega\varphi\,d\widetilde\mu
=\int_\Omega\nabla d\cdot\nabla\varphi\,dx+\frac n{r_e}\int_\Omega\varphi\,dx
\qquad\text{for }\varphi\in C_c^\infty(\Omega),
\end{equation}
and \eqref{eq:test-tilde-mu} extends to $\varphi=\varphi_\eta$: the
mollifications $\varphi_{\eta,j}\in C_c^\infty(\Omega)$ of $\varphi_\eta$
converge to it uniformly and in $W^{1,1}(\Omega)$, with supports in a fixed
compact $K\Subset\Omega$, so the left-hand side passes to the limit because
$\widetilde\mu(K)<\infty$ and the right-hand side because
$\nabla d\in L^\infty(\Omega)$. Using $\abs{\nabla d}=1$ almost everywhere,
$0\le\varphi_\eta\le1$, and \eqref{eq:layer} with $\eta+\sigma<2\sigma<s_0$,
\[
\int_\Omega\varphi_{\eta}\,d\widetilde\mu
=\frac{\abs{\{\eta<d<\eta+\sigma\}}}{\sigma}+\frac n{r_e}\int_\Omega\varphi_\eta\,dx
\ \le\ \frac{c_\Omega(\eta+\sigma)}{\sigma}+\frac n{r_e}\abs{\Omega}
\ \le\ 2c_\Omega+\frac n{r_e}\abs{\Omega}.
\]
As $\eta\downarrow0$ the functions $\varphi_\eta$ increase pointwise to
$\min\{d/\sigma,1\}$, which equals $1$ on $\{d\ge\sigma\}$; by monotone
convergence,
\[
\widetilde\mu\bigl(\{d\ge\sigma\}\bigr)
\ \le\ \int_\Omega\min\{d/\sigma,1\}\,d\widetilde\mu
\ \le\ 2c_\Omega+\frac n{r_e}\abs{\Omega},
\]
independently of $\sigma$. Letting $\sigma\downarrow0$ gives
$\widetilde\mu(\Omega)\le2c_\Omega+n\abs{\Omega}/r_e<\infty$, and therefore
\[
\abs{\mu}(\Omega)\ \le\ \widetilde\mu(\Omega)+\frac n{r_e}\abs{\Omega}
\ \le\ 2c_\Omega+\frac{2n}{r_e}\abs{\Omega}\ <\ \infty . \qedhere
\]
\end{proof}
\subsection{Elementary large-\texorpdfstring{$p$}{p} estimates}
The barriers produce factors such as $K^{1/(p-1)}$ and $d^{\gamma}$. The
following estimates convert them to the additive $1/p$ scale.
\begin{lemma}\label{lem:elementary}
Let $\rho>0$ and $\delta:=1/(p-1)$.
\begin{enumerate}
\item[(i)] For all $p\ge2$,
$\sup_{0\le r\le\rho}\abs{r^{\gamma}-r}\le C_2(\rho)/p$, where
$C_2(\rho):=2\max\{e^{-1},\,\rho^2\log(\max\{1,\rho\})\}$.
\item[(ii)] If $K\ge1$ then $K^{1/(p-1)}\le1+\dfrac{4\log K}{p}$ for every
$p\ge\max\{2,\,1+\log K\}$.
\item[(iii)] $\dfrac{p-1}{p}\,n^{-1/(p-1)}\ \ge\ 1-\dfrac{2+2\log n}{p}$ for
all $p\ge2$ and $n\ge1$.
\end{enumerate}
\end{lemma}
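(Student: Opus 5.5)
All three estimates come from writing the powers as exponentials in $\delta=1/(p-1)$ and using the elementary inequalities $1-e^{-t}\le t$ and $e^{t}-1\le te^{t}$ for $t\ge0$, together with $e^t\le1+(e-1)t$ on $[0,1]$ (convexity). Throughout, $p\ge2$ gives $\delta\le1$ and $\delta=\frac1{p-1}\le\frac2p$; the latter is how each bound in $\delta$ is converted to the $1/p$ scale. Since $\gamma=1+\delta$, we have $r^\gamma-r=r(r^\delta-1)$.

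For (i) I split the range of $r$ at $1$.
\begin{itemize}
\item On $[0,1]$, writing $r^\delta=e^{-\delta\log(1/r)}$ gives
\[
0\le r-r^\gamma=r\bigl(1-r^\delta\bigr)\le\delta\,r\log(1/r)\le\frac{\delta}{e}\le\frac{2}{ep}.
\]
Here I use that $r\log(1/r)$ is maximized at $r=e^{-1}$.
\item On $[1,\rho]$, which is relevant only if $\rho>1$, the inequality $e^t-1\le te^t$ gives
\[
0\le r^\gamma-r=r\bigl(e^{\delta\log r}-1\bigr)\le\rho\,\delta\log\rho\;\rho^\delta\le\delta\rho^2\log\rho\le\frac{2\rho^2\log\rho}{p}.
\]
This uses $\rho^\delta\le\rho$, valid because $\delta\le1$ and $\rho\ge1$.
\end{itemize}
Taking the maximum of the two cases gives exactly $C_2(\rho)/p$. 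When $\rho\le1$, the factor $\log\max\{1,\rho\}$ vanishes and only the first case is relevant.

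For (ii), set $t:=\delta\log K\ge0$. The hypothesis $p\ge1+\log K$ gives $t\le1$. Convexity of the exponential on $[0,1]$ then yields
\[
K^{1/(p-1)}=e^t\le1+(e-1)t\le1+2t=1+\frac{2\log K}{p-1}\le1+\frac{4\log K}{p},
\]
where the last step uses $p\ge2$.

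For (iii), set $a:=1/p\in(0,\tfrac12]$ and $b:=\delta\log n\le 2\log n/p$. Then $n^{-\delta}=e^{-b}\ge\max\{0,1-b\}$. Since $1-a\in[\tfrac12,1]$, we get
\[
(1-a)\,n^{-\delta}\ge(1-a)\max\{0,1-b\}\ge1-a-b.
\]
The last inequality holds because, when $b\le1$, $(1-a)(1-b)=1-a-b+ab\ge1-a-b$. When $b>1$, the left side is $\ge0>1-a-b$. This gives
\[
\frac{p-1}{p}\,n^{-1/(p-1)}\ge1-\frac{1+2\log n}{p}\ge1-\frac{2+2\log n}{p}.
\]

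No step is a real obstacle. The only point needing care is the case split in (i), in particular $\rho^\delta\le\rho$, which requires both $\delta\le1$ and $\rho\ge1$, and the possible negativity of $1-b$ in (iii), which the $\max\{0,\cdot\}$ handles.
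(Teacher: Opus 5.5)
Your proposal is correct and follows the paper's proof essentially step for step: in (i) you split at $r=1$ and use $1-e^{-t}\le t$ and $e^t-1\le te^t$ together with $\delta\le 2/p$, and in (ii) you use $e^t\le 1+2t$ on $[0,1]$. The only variation is in (iii). There you bound the product $(1-\tfrac1p)e^{-b}\ge 1-\tfrac1p-b$ directly, whereas the paper bounds its negative logarithm by $(2+2\log n)/p$; your route gives the marginally sharper constant $1+2\log n$. Your case split via $\max\{0,1-b\}$ is also unnecessary, since $(1-a)(1-b)\ge 1-a-b$ holds for all $a\in[0,1]$ and $b\ge0$.
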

\begin{proof}
(i) If $0<r\le1$, then $1-e^{-t}\le t$ with $t=\delta\abs{\log r}$ gives
$0\le r-r^{1+\delta}\le\delta/e$. If $1\le r\le\rho$, then $e^t-1\le te^t$
with $t=\delta\log r$ and $\delta\le1$ gives
$0\le r^{1+\delta}-r\le\delta\rho^2\log\rho$. Conclude with $\delta\le2/p$.
(ii) Since $e^t\le1+2t$ for $0\le t\le1$, the condition $p-1\ge\log K$ gives
$K^{1/(p-1)}\le1+2\log K/(p-1)$, and $2/(p-1)\le4/p$ for $p\ge2$.
(iii) For $p\ge2$, $-\log(1-1/p)\le\frac1{p-1}\le\frac2p$ and
$\frac{\log n}{p-1}\le\frac{2\log n}{p}$, so the negative logarithm of the
left-hand side is at most $(2+2\log n)/p$; conclude with $e^{-t}\ge1-t$.
\end{proof}

\section{The lower estimate}\label{sec:lower}
The pointwise lower estimate follows by comparing $u_p$ with the radial
$p$-torsion function on the inscribed ball $B(x,d(x))$. It is the
$p$-Laplacian counterpart of Magnanini--Poggesi~\cite[Lemma~4.1]{MP}, with the
source normalized from $n$ to $1$; the proof below requires no boundary
regularity.
\begin{lemma}[Inscribed-ball comparison]\label{lem:ball-comparison}
Let $\Omega\subset\R^n$ be a bounded domain and $p>1$. Then
\begin{equation}\label{eq:pointwise-lower}
u_p(x)\ \ge\ \frac{p-1}{p}\,n^{-1/(p-1)}\,d(x)^{\gamma}
\qquad\text{for every }x\in\Omega.
\end{equation}
\end{lemma}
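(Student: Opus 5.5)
Fix $x_0\in\Omega$, set $R:=d(x_0)>0$ and $B:=B(x_0,R)\subset\Omega$. The plan is to write down the explicit radial $p$-torsion function of $B$, check that it qualifies as a subsolution in Lemma~\ref{lem:comparison}(ii), and then evaluate at the centre. Concretely, take
\[
w(x):=\frac{p-1}{p}\,n^{-1/(p-1)}\Bigl(R^{\gamma}-\abs{x-x_0}^{\gamma}\Bigr),\qquad \gamma=\frac p{p-1},
\]
extended by zero outside $B$. Writing $r=\abs{x-x_0}$, one has $w'(r)=-n^{-1/(p-1)}r^{1/(p-1)}$. Hence $\abs{\nabla w}^{p-2}\nabla w=-\frac1n\,(x-x_0)$, a smooth vector field whose divergence is $-1$. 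So $-\Delta_p w=1$ holds classically in $B$, with a flux that is smooth even at the centre, where the gradient vanishes.

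Next I will verify the hypotheses of Lemma~\ref{lem:comparison}(ii). The function $w$ is Lipschitz on $\overline B$ and vanishes on $\partial B$, so $w\in W^{1,p}_0(B)$. Take a non-negative $\varphi\in W^{1,p}_0(B)$. Because the flux $F:=-\frac1n(x-x_0)$ is smooth up to $\partial B$, the divergence theorem, applied first to $\varphi\in C_c^\infty(B)$ and then extended by density, gives
\[
\int_B F\cdot\nabla\varphi\,dx=-\int_B(\operatorname{div}F)\,\varphi\,dx=\int_B\varphi\,dx .
\]
This is equality, so in particular the required inequality holds. Lemma~\ref{lem:comparison}(ii) then yields $w\le u_p$ almost everywhere in $B$.

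The only delicate step is going from this almost-everywhere inequality to the pointwise value at $x_0$. Both $w$ and $u_p$ are continuous in $B$: $u_p$ by the interior $C^{1,\alpha}_{\mathrm{loc}}$ regularity representative fixed in Section~\ref{sec:prelim}, and $w$ explicitly. So the inequality holds everywhere in $B$, and in particular at $x_0$:
\[
u_p(x_0)\ge w(x_0)=\frac{p-1}{p}\,n^{-1/(p-1)}R^{\gamma}=\frac{p-1}{p}\,n^{-1/(p-1)}\,d(x_0)^{\gamma}.
\]
Since $x_0\in\Omega$ was arbitrary, this proves \eqref{eq:pointwise-lower}. No boundary regularity of $\Omega$ enters anywhere, because the comparison takes place entirely on the smooth ball $B$.
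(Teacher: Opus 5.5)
Your proposal is correct and follows the paper's proof essentially step for step. You use the same explicit radial profile on the inscribed ball, note that its flux $-\frac1n(x-x_0)$ is smooth across the centre, apply Lemma~\ref{lem:comparison}(ii), and pass from the almost-everywhere inequality to the value at the centre by continuity.
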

\begin{proof}
Fix $x\in\Omega$, put $r:=d(x)$ and $B:=B(x,r)\subset\Omega$. The function
\[
w(y):=\frac{p-1}{p}\,n^{-1/(p-1)}\bigl(r^{\gamma}-\abs{y-x}^{\gamma}\bigr)
\]
lies in $C^1(\overline B)\cap W^{1,p}_0(B)$ and solves $-\Delta_pw=1$ in $B$:
with $s=\abs{y-x}$ one has $w'(s)=-n^{-1/(p-1)}s^{1/(p-1)}$, so
$\abs{w'}^{p-2}w'=-s/n$ and
$-\Delta_pw=-s^{1-n}\bigl(s^{n-1}\abs{w'}^{p-2}w'\bigr)'=1$ for $s>0$; the
flux $\abs{\nabla w}^{p-2}\nabla w=-(y-x)/n$ is smooth across the center, so
the equation holds weakly in $B$ against every test function in
$W^{1,p}_0(B)$. Lemma~\ref{lem:comparison}(ii) gives $u_p\ge w$ almost
everywhere in $B$, hence everywhere in $B$ since both functions are
continuous there, and evaluation at the center gives
\eqref{eq:pointwise-lower}.
\end{proof}

Lemma~\ref{lem:elementary} converts the pointwise comparison into the
additive estimate used in Theorems~\ref{thm:universal-rate} and~\ref{thm:main}.
\begin{proposition}[Additive lower estimate]\label{thm:lower}
Let $\Omega\subset\R^n$ be a bounded domain. Then, for every $x\in\Omega$ and
every $p\ge2$,
\[
u_p(x)\ \ge\ d(x)-\frac{C(n,\rho_\Omega)}{p},
\qquad
C(n,\rho_\Omega):=C_2(\rho_\Omega)+(2+2\log n)\rho_\Omega ,
\]
with $C_2$ the constant of Lemma~\ref{lem:elementary}\emph{(i)}.
\end{proposition}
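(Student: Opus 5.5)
The plan is to start from the pointwise inscribed-ball bound of Lemma~\ref{lem:ball-comparison}, $u_p(x)\ge a_p\,d(x)^{\gamma}$ with $a_p:=\frac{p-1}{p}n^{-1/(p-1)}\in(0,1]$, and split the defect $d-a_pd^\gamma$ into a term coming from the exponent and a term coming from the prefactor:
\[
d(x)-a_p\,d(x)^{\gamma}
=\bigl(d(x)-d(x)^{\gamma}\bigr)+(1-a_p)\,d(x)^{\gamma}.
\]
Each of these two terms is then handled by one part of Lemma~\ref{lem:elementary}.

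For the first term, fix $x\in\Omega$, so that $0<d(x)\le\rho_\Omega$. Lemma~\ref{lem:elementary}(i) with $\rho=\rho_\Omega$ gives $d(x)-d(x)^\gamma\le\abs{d(x)^\gamma-d(x)}\le C_2(\rho_\Omega)/p$ for every $p\ge2$.

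For the second term, Lemma~\ref{lem:elementary}(iii) gives $1-a_p\le(2+2\log n)/p$, and $1-a_p\ge0$ because $a_p\le1$. It remains to bound $d(x)^\gamma$ by $\rho_\Omega$.
\begin{itemize}
\item If $d(x)\le1$, then $d(x)^\gamma\le d(x)\le\rho_\Omega$.
\item If $d(x)>1$, then $d(x)^\gamma$ may exceed $\rho_\Omega$, so this bound is the one point needing care. I would avoid the issue by bounding the product $(1-a_p)d^\gamma$ differently: write $(1-a_p)d^\gamma\le(1-a_p)d+(1-a_p)(d^\gamma-d)$. The first piece is at most $(2+2\log n)\rho_\Omega/p$. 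The second piece is at most $(d^\gamma-d)\le C_2/p$.
\end{itemize}

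The $d(x)>1$ case is the main obstacle. A cruder version of the splitting would give $2C_2$ rather than $C_2$, so the grouping has to be arranged carefully. Rearranging,
\[
d-a_pd^\gamma=(1-a_p)d+a_p(d-d^\gamma).
\]
Since $0\le a_p\le1$, the last term is at most $\abs{d-d^\gamma}\le C_2(\rho_\Omega)/p$. The first term is at most $(2+2\log n)\rho_\Omega/p$. This form works for both $d\le1$ and $d>1$ and yields exactly $C(n,\rho_\Omega)$, so it is the decomposition I would actually use.

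Combining the two bounds gives $u_p(x)\ge a_pd(x)^\gamma\ge d(x)-C(n,\rho_\Omega)/p$, as claimed.
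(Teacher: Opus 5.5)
Your final decomposition $d-a_pd^{\gamma}=(1-a_p)d+a_p(d-d^{\gamma})$ is correct and is exactly the paper's argument: you bound the first term by Lemma~\ref{lem:elementary}(iii) together with $d\le\rho_\Omega$, and the second by $0<a_p\le1$ together with Lemma~\ref{lem:elementary}(i). The preliminary splitting through $(1-a_p)d^{\gamma}$ and the case distinction on $d(x)>1$ are superfluous and can simply be deleted.
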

\begin{proof}
Write $A_{p,n}:=\frac{p-1}{p}n^{-1/(p-1)}$, so $0<A_{p,n}\le1$ and
$A_{p,n}\ge1-C_1/p$ with $C_1:=2+2\log n$ by
Lemma~\ref{lem:elementary}(iii). By Lemma~\ref{lem:elementary}(i),
$\norm{d^{\gamma}-d}_{L^\infty}\le C_2/p$ with $C_2=C_2(\rho_\Omega)$. Hence
\[
u_p-d\ \ge\ A_{p,n}d^{\gamma}-d
=A_{p,n}\bigl(d^{\gamma}-d\bigr)-(1-A_{p,n})d
\ \ge\ -\frac{C_2}{p}-\frac{C_1\rho_\Omega}{p}. \qedhere
\]
\end{proof}

\section{Upper estimates by comparison}\label{sec:upper}
We obtain each upper bound by exhibiting an explicit non-negative
supersolution $w$ of $-\Delta_pw=1$ and applying
Lemma~\ref{lem:comparison}(i). The barriers below are ordered from the most
general to the sharpest: a boundary-centered radial barrier needs only boundedness
(Theorem~\ref{thm:universal-rate}); an exterior annulus needs a uniform
exterior ball and gives an estimate linear in $d$ (Theorem~\ref{thm:main}),
with the diameter as constant in the convex case; and a multiple of $d$
itself, available when $-\Delta d$ has a positive distributional lower bound,
gives the sharpest constant where it applies.

\subsection{A boundary-centered radial barrier: proof of Theorem~\ref{thm:universal-rate}}
\label{subsec:universal-upper}
The exponent $(p-n)/(p-1)$ is that of the radial $p$-harmonic fundamental
solution; see the comparison with H\"older cones in Bungert~\cite{Bun}. For the
inhomogeneous equation the radial flux can likewise be integrated explicitly,
and placing its pole on the boundary gives an admissible comparison function
when $p>n$.

\begin{proof}[Proof of Theorem~\ref{thm:universal-rate}]
Fix $p>n$, $y\in\partial\Omega$, and $R>D$. For $0<s<R$ put
\[
f_R(s):=\frac{R^n-s^n}{n s^{n-1}},
\qquad
V_{y,R}(z):=\int_0^{\abs{z-y}}f_R(s)^{1/(p-1)}\,ds
\quad(z\in\overline\Omega),
\]
with $V_{y,R}(y)=0$. Since
$0\le f_R(s)^{1/(p-1)}\le (R^n/n)^{1/(p-1)}s^{-(n-1)/(p-1)}$ and
$(n-1)/(p-1)<1$, the integral is finite and $V_{y,R}$ is continuous and
non-negative on $\overline\Omega$. Its classical gradient on $\Omega$ belongs
to $L^p(\Omega)$, because
\[
\int_\Omega\abs{\nabla V_{y,R}}^p\,dz
\le \abs{\partial B(0,1)}(R^n/n)^{p/(p-1)}
\int_0^R s^{-(n-1)/(p-1)}\,ds<\infty,
\]
so $V_{y,R}\in W^{1,p}(\Omega)$. Writing $r=\abs{z-y}$, the radial flux
satisfies
\[
r^{n-1}\abs{V_{y,R}'(r)}^{p-2}V_{y,R}'(r)=(R^n-r^n)/n,
\]
whose
derivative is $-r^{n-1}$; hence $-\Delta_pV_{y,R}=1$ classically in $\Omega$,
the pole $y$ lying on $\partial\Omega$ and hence outside the support of every
test function in $C_c^\infty(\Omega)$. Lemma~\ref{lem:comparison}(i) gives
$u_p\le V_{y,R}$ in $\Omega$.

For a given $x\in\Omega$, choose $y\in\partial\Omega$ with
$\abs{x-y}=d(x)$. Then, with $\beta_p:=(p-n)/(p-1)$,
\[
u_p(x)\le V_{y,R}(x)
\le (R^n/n)^{1/(p-1)}\int_0^{d(x)}s^{-(n-1)/(p-1)}\,ds
=\frac{(R^n/n)^{1/(p-1)}}{\beta_p}\,d(x)^{\beta_p}.
\]
Letting $R\downarrow D$ proves \eqref{eq:universal-upper}.

For the additive error assume $p\ge\max\{2n,1+M_{n,D}\}$ and set
\[
\delta:=\frac1{p-1},\qquad \varepsilon:=(n-1)\delta,
\qquad B_p:=\frac{(D^n/n)^\delta}{1-\varepsilon},
\]
so that \eqref{eq:universal-upper} reads $u_p\le B_p\,d^{1-\varepsilon}$. We
split the error as $u_p-d\le(B_p-1)d+B_p(d^{1-\varepsilon}-d)$ and bound the
prefactor defect $B_p-1$ and the exponent defect $d^{1-\varepsilon}-d$
separately. Since $0\le\varepsilon\le1/2$ we have $-\log(1-\varepsilon)\le2\varepsilon$, and
\[
\log B_p
=-\log(1-\varepsilon)+\delta\log(D^n/n)
\le\delta\bigl(2(n-1)+\log\max\{1,D^n/n\}\bigr)=\delta M_{n,D}\le1 .
\]
Consequently $B_p\le e$ and $B_p-1\le2\delta M_{n,D}\le4M_{n,D}/p$. For
$0<r\le1$, since $1-e^{-t}\le t$ and $\sup_{0<r\le1}r^{a}\abs{\log r}=1/(ea)$,
\[
0\le r^{1-\varepsilon}-r
=r^{1-\varepsilon}\bigl(1-r^{\varepsilon}\bigr)
\le\varepsilon r^{1-\varepsilon}\abs{\log r}
\le\frac{\varepsilon}{e(1-\varepsilon)}
\le\frac{4(n-1)}{ep},
\]
whereas $r^{1-\varepsilon}-r\le0$ for $r\ge1$. Therefore
\[
u_p-d\ \le\ \frac{4\rho_\Omega M_{n,D}}{p}+e\cdot\frac{4(n-1)}{ep}
=\frac{4\rho_\Omega M_{n,D}+4(n-1)}{p}.
\]
Combining this with Proposition~\ref{thm:lower} gives
\eqref{eq:universal-rate}.
\end{proof}

\subsection{The exterior annulus: proof of Theorem~\ref{thm:main}}
\label{subsec:pointwise-upper}
Let $\Omega$ satisfy Definition~\ref{def:ext-ball} with radius $r_e>0$. For
$R>D+r_e$ set
\begin{equation}\label{eq:F}
F_R(s):=\frac{R^n-s^n}{n\,s^{n-1}},
\qquad 0<s\le R .
\end{equation}
Then $F_R>0$ on $(0,R)$ and
\begin{equation}\label{eq:F-decreasing}
F_R'(s)=\frac1n\Bigl((1-n)R^ns^{-n}-1\Bigr)<0,
\end{equation}
so $F_R$ is strictly decreasing. Radial $p$-torsion profiles on annuli are
treated by Bueno--Ercole~\cite{BE2015}; the profile below vanishes on the inner
sphere of the containing annulus and has zero radial flux at the auxiliary
outer sphere.
\begin{lemma}[Exterior barrier]\label{lem:barrier}
For $y\in\partial\Omega$ let $c_y$ be as in Definition~\ref{def:ext-ball}, let
$A_y:=\{z:\ r_e<\abs{z-c_y}<R\}$, and put
\begin{equation}\label{eq:W}
W_{y,R}(z):=\int_{r_e}^{\abs{z-c_y}}F_R(s)^{1/(p-1)}\,ds,
\qquad z\in\overline{A_y}.
\end{equation}
Then $\Omega\subset A_y$, the function $W_{y,R}$ is non-negative, belongs to
$C^1(\overline{A_y})$, solves $-\Delta_pW_{y,R}=1$ in $A_y$, and
$u_p\le W_{y,R}$ in $\Omega$.
\end{lemma}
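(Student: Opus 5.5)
The proof has four steps: the containment $\Omega\subset A_y$, regularity and sign of $W_{y,R}$, the equation, and comparison via Lemma~\ref{lem:comparison}(i).

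\emph{Containment.} The first elementary consequence of Definition~\ref{def:ext-ball} gives $\abs{z-c_y}>r_e$ for every $z\in\Omega$. Taking $y\in\partial\Omega\subset\overline\Omega$, we have $\abs{z-c_y}\le\abs{z-y}+\abs{y-c_y}\le D+r_e<R$. Hence $\Omega\subset A_y$.

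\emph{Regularity and sign.} On $[r_e,R]$ the function $F_R$ is continuous, because $s\ge r_e>0$, and it is non-negative, positive on $[r_e,R)$ and vanishing at $R$. So $F_R^{1/(p-1)}$ is continuous on $[r_e,R]$, and $W_{y,R}$ is a $C^1$ function of $r=\abs{z-c_y}$ on $[r_e,R]$. It is non-negative because the integrand is non-negative and the integration runs upward from $r_e$. Since $r$ is smooth away from $c_y$, and $c_y\notin\overline{A_y}$, we get $W_{y,R}\in C^1(\overline{A_y})$ with $\nabla W_{y,R}=F_R(r)^{1/(p-1)}\hat n$.

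\emph{The equation.} The flux is
\[
\abs{\nabla W_{y,R}}^{p-2}\nabla W_{y,R}=F_R(r)\,\hat n=\frac{R^n-r^n}{n r^{n-1}}\,\hat n .
\]
This is a smooth vector field on $A_y$. Its divergence is
\[
r^{1-n}\bigl(r^{n-1}F_R(r)\bigr)'=r^{1-n}\bigl((R^n-r^n)/n\bigr)'=-1,
\]
so $-\Delta_pW_{y,R}=1$ classically, and hence weakly, in $A_y$. This needs no regularity of $W$ beyond $C^1$, because the flux itself is smooth. The only delicate point is the degenerate case: when $p<2$ and $F_R(r)=0$, the expression $\abs{\nabla W}^{p-2}\nabla W$ must be read as the flux $F_R\hat n$. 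But $F_R(r)=0$ only at $r=R$, which lies outside $A_y$, so the case never arises in $A_y$.

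\emph{Comparison.} Restricted to $\Omega$, the function $W_{y,R}$ is bounded and $C^1$ with bounded gradient, so it lies in $W^{1,p}(\Omega)$ and is non-negative. For a non-negative $\varphi\in C_c^\infty(\Omega)\subset C_c^\infty(A_y)$, integrating by parts against the smooth flux gives
\[
\int_\Omega\abs{\nabla W_{y,R}}^{p-2}\nabla W_{y,R}\cdot\nabla\varphi\,dx=\int_\Omega\varphi\,dx ,
\]
so \eqref{eq:supersol} holds with equality. Lemma~\ref{lem:comparison}(i) then yields $u_p\le W_{y,R}$ almost everywhere, and everywhere by continuity of both functions.

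I expect no real obstacle here. The one point to check carefully is that $R>D+r_e$ keeps $\overline\Omega$ strictly inside the region where $F_R>0$, which the containment step settles.
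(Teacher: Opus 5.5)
Your proposal is correct and follows the paper's proof essentially step for step. Both arguments use the same triangle-inequality containment $\Omega\subset A_y$, the same radial flux identity $\rho^{n-1}\abs{W_{y,R}'}^{p-2}W_{y,R}'=(R^n-\rho^n)/n$, and the same application of Lemma~\ref{lem:comparison}(i) with equality in \eqref{eq:supersol}; your additional remarks on $c_y\notin\overline{A_y}$ and on $F_R$ vanishing only at $\rho=R$ are harmless refinements.
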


\begin{figure}[htbp]
\centering
\resizebox{\textwidth}{!}{%
\begin{tikzpicture}[scale=1,>=Latex,every node/.style={font=\small}]
  \coordinate (c) at (0,0);        
  \coordinate (y) at (-1.1,0);     
  \coordinate (x) at (-2.0,0);     
  \fill[gray!11,even odd rule] (c) circle (3.8) (c) circle (1.1);
  \draw[dashed,gray!60] (c) circle (3.8);
  \node[gray!50!black,anchor=south] at (0,3.85) {$\partial B(c_y,R)$};
  \node[gray!50!black] at (-2.3,2.45) {$A_y$};
  \draw[->,gray!60!black] (c) -- (50:3.8) node[pos=0.55,right=1pt] {$R$};
  \fill[blue!13] (-2.1,0) ellipse (1.0 and 1.25);
  \draw[very thick,blue!55!black] (-2.1,0) ellipse (1.0 and 1.25);
  \node[blue!55!black] at (-2.2,-1.05) {$\Omega$};
  \draw[blue!55!black,thin] (-3.35,1.45) -- (-2.81,0.88);
  \node[blue!55!black,anchor=south east] at (-3.3,1.4) {$\partial\Omega$};
  \draw[densely dashed,blue!55!black] (x) circle (0.9);
  \draw[blue!55!black,thin] (-3.05,-1.35) -- (-2.78,-0.45);
  \node[blue!55!black,anchor=north east] at (-3.0,-1.35) {$B(x,d(x))$};
  \fill[red!12] (c) circle (1.1);
  \draw[very thick,red!60!black] (c) circle (1.1);
  \node[red!60!black,anchor=north] at (0.15,-0.55) {$B(c_y,r_e)$};
  \draw[gray!70,thin] (x) -- (c);
  \draw[->,very thick] (y) -- ++(0.62,0);
  \node[anchor=north] at (-0.78,-0.1) {$\nu(y)$};
  \fill (c) circle (1.7pt) node[anchor=west,xshift=2pt] {$c_y$};
  \fill (y) circle (1.7pt) node[anchor=south,xshift=-6pt,yshift=3pt] {$y$};
  \fill (x) circle (1.7pt) node[above=3pt] {$x$};
  \draw[gray!60,densely dotted] (x) -- (-2.0,-2.3);
  \draw[gray!60,densely dotted] (y) -- (-1.1,-2.3);
  \draw[gray!60,densely dotted] (c) -- (0,-2.3);
  \draw[decorate,decoration={brace,mirror,amplitude=4pt}] (-2.0,-2.3) -- (-1.1,-2.3)
     node[midway,below=5pt] {$d(x)$};
  \draw[decorate,decoration={brace,mirror,amplitude=4pt}] (-1.1,-2.3) -- (0,-2.3)
     node[midway,below=5pt] {$r_e$};
  \begin{scope}[shift={(5.0,-3.0)}]
    \draw[->] (0,0) -- (7.2,0) node[right] {$\rho=\abs{z-c_y}$};
    \draw[->] (0,0) -- (0,4.9) node[above] {$W_{y,R}(\rho)$};
    \draw[thick,blue!55!black] plot[smooth] coordinates
      {(1.200,0.000) (1.470,0.340) (1.740,0.664) (2.010,0.972) (2.280,1.267)
       (2.550,1.549) (2.820,1.820) (3.090,2.079) (3.360,2.328) (3.630,2.567)
       (3.900,2.795) (4.170,3.013) (4.440,3.221) (4.710,3.418) (4.980,3.604)
       (5.250,3.779) (5.520,3.941) (5.790,4.088) (6.060,4.219) (6.330,4.329)
       (6.600,4.400)};
    \draw[red!60!black,thick,densely dashed] (1.2,0) -- (3.9,3.498);
    \node[anchor=east,align=right,font=\scriptsize] at (7.1,0.75)
      {dashed line: tangent at $\rho=r_e$,\\ slope $F_R(r_e)^{1/(p-1)}$};
    \draw[gray!70] (6.0,4.40) -- (7.0,4.40);
    \node[gray!50!black,anchor=south,font=\scriptsize] at (6.55,4.44) {$W_{y,R}'(R)=0$};
    \draw[gray!60,densely dotted] (3.0,0) -- (3.0,2.332);
    \fill[blue!55!black] (3.0,1.994) circle (1.6pt);
    \fill[red!60!black]  (3.0,2.332) circle (1.6pt);
    \draw[blue!55!black,thin] (3.0,1.994) -- (3.45,1.45);
    \node[blue!55!black,anchor=west,font=\scriptsize] at (3.45,1.42) {$W_{y,R}(x)\ \ge\ u_p(x)$};
    \draw[red!60!black,thin] (3.0,2.332) -- (2.75,2.72);
    \node[red!60!black,anchor=east,font=\scriptsize] at (2.85,2.78) {$F_R(r_e)^{1/(p-1)}\,d(x)$};
    \foreach \xx/\lab in {1.2/$r_e$, 3.0/$r_e+d(x)$, 6.6/$R$}
      {\draw (\xx,0.08) -- (\xx,-0.08) node[below] {\lab};}
  \end{scope}
\end{tikzpicture}}
\caption{Left: the exterior-barrier geometry. The exterior ball $B(c_y,r_e)$ (red) touches
$\partial\Omega$ at $y$ and is disjoint from $\Omega$ (blue). For a point $x$ whose nearest
boundary point is $y$, the inscribed ball $B(x,d(x))$ (dashed) touches the exterior ball at
the same point, so $x$, $y$ and $c_y$ are collinear and $\abs{x-c_y}=r_e+d(x)$, which is the
tangency identity \eqref{eq:tangency}. The ball $B(c_y,R)$ with $R>D+r_e$ contains
$\Omega$, and the barrier is defined on the shaded annulus $A_y=\{r_e<\abs{z-c_y}<R\}$.
Right: the radial profile $W_{y,R}$ of \eqref{eq:W}, computed for $n=2$, $p=4$, $r_e=1$,
$R=4$. It vanishes at $\rho=r_e$, is concave because $F_R$ is decreasing, and has zero slope
at $\rho=R$; it therefore lies below its tangent at $r_e$, and reading the picture at
$\rho=r_e+d(x)$ gives $u_p(x)\le W_{y,R}(x)\le F_R(r_e)^{1/(p-1)}d(x)$, which is
Proposition~\ref{thm:upper}.}
\label{fig:exterior-annulus}
\end{figure}

\begin{proof}
Figure~\ref{fig:exterior-annulus} summarizes the geometry. For $z\in\Omega$ we
have $\abs{z-c_y}>r_e$, as noted after Definition~\ref{def:ext-ball}, and
$\abs{z-c_y}\le\abs{z-y}+r_e\le D+r_e<R$, so $\Omega\subset A_y$. Writing
$\rho=\abs{z-c_y}$, the radial derivative is
$W_{y,R}'(\rho)=F_R(\rho)^{1/(p-1)}\ge0$, hence
$\rho^{n-1}\abs{W_{y,R}'}^{p-2}W_{y,R}'=(R^n-\rho^n)/n$, whose derivative is
$-\rho^{n-1}$. Therefore $-\Delta_pW_{y,R}=1$ in $A_y$, and
$W_{y,R}\in C^1(\overline{A_y})$ since $F_R^{1/(p-1)}$ is continuous on
$[r_e,R]$. Its restriction to $\Omega$ is a non-negative function in
$W^{1,\infty}(\Omega)$ satisfying \eqref{eq:supersol} with equality, so
Lemma~\ref{lem:comparison}(i) gives $u_p\le W_{y,R}$ in $\Omega$.
\end{proof}

Anchoring the barrier at a nearest boundary point identifies its radial
increment with $d(x)$.
\begin{proposition}[Exterior-ball upper estimate]\label{thm:upper}
Let $\Omega\subset\R^n$ be a bounded domain satisfying
Definition~\ref{def:ext-ball} with radius $r_e>0$, and let $K_\Omega$,
$K_\Omega^+$ be as in \eqref{eq:Komega}. Then, for every $p>1$,
\[
u_p(x)\ \le\ K_\Omega^{1/(p-1)}\,d(x)
\qquad\text{for every }x\in\Omega,
\]
and consequently
$u_p\le d+4\rho_\Omega\log K_\Omega^+/p$ in $\Omega$ for every
$p\ge\max\{2,\,1+\log K_\Omega^+\}$.
\end{proposition}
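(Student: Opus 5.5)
The plan is to apply Lemma~\ref{lem:barrier} with the exterior ball anchored at a nearest boundary point, and then use concavity of the radial profile to compare with its tangent line at $\rho=r_e$.

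Fix $x\in\Omega$ and choose $y\in\partial\Omega$ with $\abs{x-y}=d(x)$. Fix $R>D+r_e$. By the tangency identity \eqref{eq:tangency}, $\abs{x-c_y}=r_e+d(x)$. Lemma~\ref{lem:barrier} then gives
\[
u_p(x)\le W_{y,R}(x)=\int_{r_e}^{r_e+d(x)}F_R(s)^{1/(p-1)}\,ds .
\]
By \eqref{eq:F-decreasing}, $F_R$ is strictly decreasing on $(0,R]$, so the integrand is at most $F_R(r_e)^{1/(p-1)}$. Hence $u_p(x)\le F_R(r_e)^{1/(p-1)}d(x)$.

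Next let $R\downarrow D+r_e$. The quantity $F_R(r_e)=(R^n-r_e^{\,n})/(n r_e^{\,n-1})$ is continuous in $R$ and tends to $K_\Omega$ from \eqref{eq:Komega}. This gives $u_p(x)\le K_\Omega^{1/(p-1)}d(x)$ for every $x\in\Omega$ and every $p>1$. The only point to check is that the limit is legitimate: the inequality holds for each admissible $R$ and $x$ is fixed, so we simply pass to the limit in the right-hand side.

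For the additive consequence, note that $K_\Omega\le K_\Omega^+$, so $K_\Omega^{1/(p-1)}\le(K_\Omega^+)^{1/(p-1)}$. Since $K_\Omega^+\ge1$, Lemma~\ref{lem:elementary}(ii) applies whenever $p\ge\max\{2,1+\log K_\Omega^+\}$ and gives $(K_\Omega^+)^{1/(p-1)}\le1+4\log K_\Omega^+/p$. Multiplying by $d\le\rho_\Omega$ yields
\[
u_p\le d+\frac{4\rho_\Omega\log K_\Omega^+}{p}.
\]

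There is no real obstacle here. All the analytic work, namely the admissibility of $W_{y,R}$ without any boundary regularity, is already contained in Lemmas~\ref{lem:comparison} and~\ref{lem:barrier}. The one genuinely geometric input is the tangency identity \eqref{eq:tangency}, which ensures that the radial increment of the barrier is exactly $d(x)$.
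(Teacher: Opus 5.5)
Your proposal is correct and follows the paper's proof essentially step for step. You anchor the annular barrier of Lemma~\ref{lem:barrier} at a nearest boundary point and use \eqref{eq:tangency} and the monotonicity of $F_R$ to get $F_R(r_e)^{1/(p-1)}d(x)$; you then let $R\downarrow D+r_e$ and finish the additive bound with Lemma~\ref{lem:elementary}(ii) applied to $K_\Omega^+$ and $d\le\rho_\Omega$.
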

\begin{proof}
Fix $x\in\Omega$ and choose $y\in\partial\Omega$ with $\abs{x-y}=d(x)$. By
\eqref{eq:tangency}, $\abs{x-c_y}=r_e+d(x)$. Let $R>D+r_e$. By
Lemma~\ref{lem:barrier} and \eqref{eq:F-decreasing},
\[
u_p(x)\ \le\ W_{y,R}(x)
=\int_{r_e}^{r_e+d(x)}F_R(s)^{1/(p-1)}\,ds
\ \le\ F_R(r_e)^{1/(p-1)}\,d(x).
\]
Letting $R\downarrow D+r_e$ gives $F_R(r_e)\to K_\Omega$ and hence
$u_p(x)\le K_\Omega^{1/(p-1)}d(x)$. The second assertion follows from
$K_\Omega^{1/(p-1)}\le (K_\Omega^+)^{1/(p-1)}\le1+4\log K_\Omega^+/p$
(Lemma~\ref{lem:elementary}(ii)) and $d\le\rho_\Omega$.
\end{proof}

\begin{proof}[Proof of Theorem~\ref{thm:main}]
The two-sided bound \eqref{eq:main-pointwise} is
Lemma~\ref{lem:ball-comparison} together with Proposition~\ref{thm:upper},
and \eqref{eq:main-rate} follows from the second assertion of
Proposition~\ref{thm:upper} and Proposition~\ref{thm:lower}.
\end{proof}

\begin{remark}[Scaling]\label{rem:scaling}
For $a>0$ and $\Omega_a:=a\Omega$ one has
$u_{p,\Omega_a}(ax)=a^{\gamma}u_{p,\Omega}(x)$,
$d_{\Omega_a}(ax)=a\,d_\Omega(x)$ and $K_{\Omega_a}=a\,K_\Omega$, so the
multiplicative estimate $u_p\le K_\Omega^{1/(p-1)}d$ is exactly invariant
under dilation, since $a^{1/(p-1)}\cdot a=a^{\gamma}$. The additive
first-order constant in \eqref{eq:main-rate}, by contrast, changes by a
logarithm of the scale; this is intrinsic to the $p\to\infty$ expansion
rather than a defect of the estimate. Its dependence on $r_e$ is also
explicit: for fixed $D$ and $n\ge2$, $K_\Omega\sim D^nr_e^{\,1-n}/n$ as
$r_e\downarrow0$, so the constant grows logarithmically in $1/r_e$, whereas
the constant in Theorem~\ref{thm:universal-rate} is independent of $r_e$.
\end{remark}

\subsection{Convex domains}\label{subsec:convex}
Every bounded convex domain admits every $r_e>0$ as an exterior radius, and
expanding \eqref{eq:Komega} binomially,
\begin{equation}\label{eq:K-expansion}
K_\Omega(r_e)=\frac1n\sum_{m=1}^{n}\binom nm D^{\,m}\,r_e^{\,1-m}
\ \downarrow\ D
\qquad\text{as }r_e\uparrow\infty .
\end{equation}
Letting $r_e\to\infty$ in Proposition~\ref{thm:upper} therefore gives the
following estimate, in which only the diameter enters and no boundary
regularity is assumed; it is the estimate one would obtain directly by
comparing with the one-dimensional profile of a supporting slab.
\begin{corollary}[Convex domains]\label{cor:convex}
Let $\Omega\subset\R^n$ be a bounded open convex set and $D^+:=\max\{1,D\}$.
Then, for every $p>1$,
\begin{equation}\label{eq:slab}
u_p(x)\ \le\ D^{1/(p-1)}\,d(x)
\qquad\text{for every }x\in\Omega,
\end{equation}
and
$\norm{u_p-d}_{L^\infty(\Omega)}\le\bigl(4\rho_\Omega\log D^+ + C(n,\rho_\Omega)\bigr)/p$
for every $p\ge\max\{2,1+\log D^+\}$.
\end{corollary}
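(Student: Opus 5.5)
The plan is to apply Proposition~\ref{thm:upper} for every exterior radius $r_e>0$ and then let $r_e\to\infty$. As observed after Definition~\ref{def:ext-ball}, a bounded convex domain admits every $r_e>0$. Indeed, at each $y\in\partial\Omega$ there is a supporting hyperplane with outer normal $\nu$, and $B(y+r_e\nu,r_e)$ is contained in the open half-space $\{(z-y)\cdot\nu>0\}$, which misses $\Omega$. No regularity of $\partial\Omega$ enters here, and Proposition~\ref{thm:upper} does not require any. Hence for every $p>1$, every $r_e>0$ and every $x\in\Omega$,
\[
u_p(x)\le K_\Omega(r_e)^{1/(p-1)}\,d(x).
\]

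Next I would pass to the limit in the constant, using the binomial expansion \eqref{eq:K-expansion}:
\[
K_\Omega(r_e)=D+\frac1n\sum_{m=2}^{n}\binom nm D^m r_e^{1-m}.
\]
Every term with $m\ge2$ is non-negative and tends to $0$ as $r_e\to\infty$, so $K_\Omega(r_e)\downarrow D$. Since $t\mapsto t^{1/(p-1)}$ is continuous, letting $r_e\to\infty$ in the pointwise bound gives \eqref{eq:slab}. The degenerate case $n=1$ is already included, because then $K_\Omega=D$ exactly.

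For the additive estimate I would not pass through $K_\Omega^+$ but work directly with \eqref{eq:slab}. Since $D\le D^+$ and $D^+\ge1$, Lemma~\ref{lem:elementary}(ii) with $K=D^+$ gives
\[
D^{1/(p-1)}\le (D^+)^{1/(p-1)}\le 1+\frac{4\log D^+}{p}\qquad\text{for }p\ge\max\{2,1+\log D^+\}.
\]
Multiplying by $d\le\rho_\Omega$ yields
\[
u_p-d\le \frac{4\rho_\Omega\log D^+}{p}.
\]
The matching lower bound $u_p-d\ge -C(n,\rho_\Omega)/p$ holds for $p\ge2$ by Proposition~\ref{thm:lower}, and the two bounds together give the stated $L^\infty$ estimate.

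I expect no real obstacle. The only care points are two. First, the limit is taken in the constant, not in the barrier, so no uniformity in $r_e$ is needed. Second, the case $D<1$ must use $D^+$ to keep the logarithm non-negative, as Lemma~\ref{lem:elementary}(ii) requires $K\ge1$.
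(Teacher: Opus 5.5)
Your proposal is correct and follows the paper's proof. Both apply Proposition~\ref{thm:upper} for every $r_e>0$, let $r_e\to\infty$ using \eqref{eq:K-expansion} (your separation of the $m=1$ term, which equals $D$, is accurate), and obtain the additive bound from Lemma~\ref{lem:elementary}(ii) with $K=D^+$ together with Proposition~\ref{thm:lower}.
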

\begin{proof}
For each $r_e>0$, Proposition~\ref{thm:upper} gives
$u_p\le K_\Omega(r_e)^{1/(p-1)}d$; let $r_e\to\infty$ and use
\eqref{eq:K-expansion}. The additive bound follows from
Lemma~\ref{lem:elementary}(ii) with $K=D^+$ and Proposition~\ref{thm:lower}.
\end{proof}

\subsection{A distance-function supersolution}\label{subsec:dbarrier}
When the distance function is strictly superharmonic in the distributional
sense, a scalar multiple of $d$ is itself a supersolution. On the domains
where this applies it gives a sharper constant than the exterior annulus.
\begin{theorem}[Distance supersolution estimate]\label{thm:dbarrier}
Let $\Omega\subset\R^n$ be a bounded domain and suppose there is $\lambda>0$
with
\begin{equation}\label{eq:kappa}
-\Delta d\ \ge\ \lambda
\qquad\text{in }\mathcal D'(\Omega),
\end{equation}
that is,
$\int_\Omega\nabla d\cdot\nabla\varphi\,dx\ge\lambda\int_\Omega\varphi\,dx$ for
every non-negative $\varphi\in C_c^\infty(\Omega)$. Then, for every $p>1$,
\begin{equation}\label{eq:dbarrier-pointwise}
u_p(x)\ \le\ \lambda^{-1/(p-1)}\,d(x)
\qquad\text{for every }x\in\Omega.
\end{equation}
In particular $u_p\le d$ for every $p>1$ if $\lambda\ge1$, while if
$\lambda<1$ then $u_p\le d+4\rho_\Omega\log(1/\lambda)/p$ in $\Omega$ for
every $p\ge\max\{2,\,1+\log(1/\lambda)\}$.
\end{theorem}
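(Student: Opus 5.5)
The plan is to take $w:=\theta d$ with $\theta:=\lambda^{-1/(p-1)}$ as a supersolution and apply Lemma~\ref{lem:comparison}(i). By Lemma~\ref{lem:d-in-W1p}, $d\in W^{1,\infty}(\Omega)\cap W^{1,p}_0(\Omega)$, so $w\in W^{1,p}(\Omega)$ and $w\ge0$. The eikonal equation $\abs{\nabla d}=1$ a.e.\ gives the flux
\[
\abs{\nabla w}^{p-2}\nabla w=\theta^{p-1}\abs{\nabla d}^{p-2}\nabla d=\theta^{p-1}\nabla d
\]
almost everywhere. Hence, for every non-negative $\varphi\in C_c^\infty(\Omega)$, hypothesis \eqref{eq:kappa} yields
\[
\int_\Omega\abs{\nabla w}^{p-2}\nabla w\cdot\nabla\varphi\,dx=\theta^{p-1}\int_\Omega\nabla d\cdot\nabla\varphi\,dx\ \ge\ \theta^{p-1}\lambda\int_\Omega\varphi\,dx=\int_\Omega\varphi\,dx .
\]
This is exactly \eqref{eq:supersol}, so $u_p\le\lambda^{-1/(p-1)}d$ almost everywhere. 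Both sides are continuous in $\Omega$ ($u_p\in C^{1,\alpha}_{\mathrm{loc}}$), so the inequality holds everywhere, which is \eqref{eq:dbarrier-pointwise}.

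For the consequences: if $\lambda\ge1$, then $\lambda^{-1/(p-1)}\le1$ and $u_p\le d$ directly. If $\lambda<1$, set $K:=1/\lambda>1$. Lemma~\ref{lem:elementary}(ii) gives $K^{1/(p-1)}\le1+4\log K/p$ for $p\ge\max\{2,1+\log K\}$, and then
\[
u_p\le d+\frac{4\log(1/\lambda)}{p}\,d\le d+\frac{4\rho_\Omega\log(1/\lambda)}{p}
\]
since $d\le\rho_\Omega$.

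No step is a serious obstacle. The only point needing care is that the hypothesis is stated against smooth test functions only, which is precisely the form Lemma~\ref{lem:comparison}(i) requires; the extension to $W^{1,p}_0$ test functions and the boundary hypothesis are already handled inside that lemma. Also, no boundary regularity is needed, since $w\ge0$ is all the comparison lemma uses at the boundary.
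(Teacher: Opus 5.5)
Your proposal is correct and follows the paper's proof essentially line by line: the barrier $w=\lambda^{-1/(p-1)}d$, the eikonal reduction of its flux to $\theta^{p-1}\nabla d$, Lemma~\ref{lem:comparison}(i), and then Lemma~\ref{lem:elementary}(ii) with $K=1/\lambda$ together with $d\le\rho_\Omega$ for the additive bound. Your step upgrading the almost-everywhere inequality to an everywhere one, using continuity of $u_p$ and $d$, is a detail the paper leaves implicit.
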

\begin{proof}
Set $\theta:=\lambda^{-1/(p-1)}$ and $w:=\theta d$. By
Lemma~\ref{lem:d-in-W1p}, $w\in W^{1,\infty}(\Omega)$, $w\ge0$, and
$\abs{\nabla d}=1$ almost everywhere, so for every non-negative
$\varphi\in C_c^\infty(\Omega)$,
\[
\int_\Omega\abs{\nabla w}^{p-2}\nabla w\cdot\nabla\varphi\,dx
=\theta^{p-1}\int_\Omega\nabla d\cdot\nabla\varphi\,dx
\ \ge\ \theta^{p-1}\lambda\int_\Omega\varphi\,dx
=\int_\Omega\varphi\,dx .
\]
Lemma~\ref{lem:comparison}(i) gives \eqref{eq:dbarrier-pointwise}. If
$\lambda\ge1$ then $\theta\le1$. If $\lambda<1$,
Lemma~\ref{lem:elementary}(ii) with $K:=1/\lambda>1$ gives
$\theta\le1+4\log(1/\lambda)/p$ for $p\ge\max\{2,1+\log(1/\lambda)\}$, and
$d\le\rho_\Omega$.
\end{proof}

\begin{remark}\label{rem:dbarrier-structure}
Estimate \eqref{eq:dbarrier-pointwise} has the form of
\eqref{eq:main-pointwise} and \eqref{eq:slab} with $K_\Omega$, respectively
$D$, replaced by $1/\lambda$; it is invariant under dilation in the sense of
Remark~\ref{rem:scaling}, since $\lambda$ scales like the inverse of a length.
On the unit ball with $n\ge2$, Proposition~\ref{prop:umc} gives
$\lambda=n-1\ge1$, and the conclusion $u_p\le d$ is the inequality obtained
directly in Theorem~\ref{thm:ball}, whereas Corollary~\ref{cor:convex} only
gives $u_p\le 2^{1/(p-1)}d$ there.
\end{remark}

The hypothesis \eqref{eq:kappa} is a curvature condition. Principal
curvatures $\kappa_1,\dots,\kappa_{n-1}$ of a $C^2$ boundary are taken with
respect to the outer unit normal and signed so that spheres have positive
curvature, and $H:=\sum_i\kappa_i$ is the non-averaged mean curvature. In these
conventions, Lewis--Li--Li \cite[Theorem~1.6 and Proposition~3.9]{LewisLiLi2012}
give the following.
\begin{proposition}[Uniform mean-convexity criterion of Lewis--Li--Li]\label{prop:umc}
Let $n\ge2$ and let $\Omega\subset\R^n$ be a bounded $C^2$ domain with
$H\ge H_0>0$ on $\partial\Omega$. Then $-\Delta d\ge H_0$ in
$\mathcal D'(\Omega)$. In particular this holds if $\Omega$ is uniformly
convex, $\kappa_i\ge\kappa_0>0$ for every $i$, with $H_0=(n-1)\kappa_0$.
\end{proposition}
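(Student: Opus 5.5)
The plan is to split the distribution $\Delta d$ into an absolutely continuous part and a singular part, show the singular part has the right sign, and bound the absolutely continuous part pointwise through the principal curvatures at the projection point.

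\emph{Step 1: splitting $\Delta d$.} The function $d$ is locally semiconcave in $\Omega$. On a ball $B\Subset\Omega$ with $d\ge\varepsilon$ on $B$, $d$ is the infimum over $y\in\partial\Omega$ of $z\mapsto\abs{z-y}$, and these functions have Hessians bounded by $\varepsilon^{-1}\mathrm{Id}$ on $B$. Hence, exactly as in Step~2 of Lemma~\ref{lem:tv-exterior-ball}, $D^2d\le C_B\,\mathrm{Id}$ in $\mathcal D'(B)$. It follows that $\Delta d$ is a locally finite signed Radon measure, and by the Lebesgue decomposition
\[
\Delta d=(\Delta d)_{ac}\,\mathcal L^n+(\Delta d)_s ,
\]
where $(\Delta d)_s\le0$, since the singular part of a measure bounded above by $C_B\mathcal L^n$ is non-positive. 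By Alexandrov's theorem, $(\Delta d)_{ac}(x)=\Tr D^2d(x)$ at almost every $x$, namely at points where $d$ has a second-order Taylor expansion. It therefore suffices to prove
\[
\Tr D^2d(x)\le-H_0\quad\text{for a.e. }x\in\Omega .
\]

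\emph{Step 2: the pointwise bound.} Fix an Alexandrov point $x$ and a nearest boundary point $y$, and put $t=d(x)$. Let $\Gamma$ be a small $C^2$ piece of $\partial\Omega$ around $y$, and let $\delta_\Gamma$ be the distance to $\Gamma$. Then $d\le\delta_\Gamma$ near $x$, with equality at $x$. Because the interior ball $B(x,t)$ touches $\Gamma$ at $y$, we have $t\kappa_i(y)\le1$ for every $i$. If $t\kappa_i(y)<1$ for all $i$, the normal map $\Phi(y',s)=y'-s\nu(y')$ is a local $C^1$ diffeomorphism near $(y,t)$, so $\delta_\Gamma$ is $C^2$ near $x$ with the classical formula
\[
\Delta\delta_\Gamma(x)=-\sum_i\frac{\kappa_i(y)}{1-t\kappa_i(y)} .
\]
Since $d-\delta_\Gamma$ has a local maximum at $x$ and both functions are twice differentiable there, $\Tr D^2d(x)\le\Delta\delta_\Gamma(x)$. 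For each $\kappa$ the map $s\mapsto\kappa/(1-s\kappa)$ is nondecreasing on the set where $1-s\kappa>0$, because its derivative is $\kappa^2/(1-s\kappa)^2\ge0$. Hence each term satisfies $\kappa_i/(1-t\kappa_i)\ge\kappa_i$, and so
\[
\Tr D^2d(x)\le-H(y)\le-H_0 .
\]

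\emph{Step 3: the focal points (the main obstacle).} What remains is to discard the points $x$ for which some nearest point $y$ has $t\kappa_i(y)=1$. Every such $x=\Phi(y,t)$ is a critical value of the $C^1$ map $\Phi:\partial\Omega\times(0,\infty)\to\R^n$, because $D\Phi$ degenerates exactly where $1-t\kappa_i=0$. By Sard's theorem for $C^1$ maps between manifolds of equal dimension, this set has measure zero. Combining the three steps gives
\[
\Delta d\le-H_0\,\mathcal L^n+(\Delta d)_s\le-H_0\,\mathcal L^n ,
\]
that is, $-\Delta d\ge H_0$ in $\mathcal D'(\Omega)$.

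The uniformly convex case follows at once, since $H=\sum_i\kappa_i\ge(n-1)\kappa_0$.

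I expect the delicate points to be Step~3 and the claim in Step~2 that the comparison function is $C^2$ near $x$ whenever $t\kappa_i<1$ for all $i$. For the latter I would first try the inverse function theorem applied to $\Phi$, which is $C^1$ because $\nu\in C^1$. This gives that $\delta_\Gamma$ is $C^1$ with $\nabla\delta_\Gamma=-\nu\circ\pi$. Since the projection $\pi$ is $C^1$, this already upgrades $\delta_\Gamma$ to $C^2$.
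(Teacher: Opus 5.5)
Your argument is correct. Its skeleton is the mechanism the paper recalls after the statement; the paper itself gives no proof here and defers to Lewis--Li--Li. In both, semiconcavity makes $\Delta d$ a locally finite Radon measure with non-positive singular part, and the absolutely continuous density is bounded by $-\sum_i\kappa_i/(1-t\kappa_i)\le-H$.

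The difference is how you reach that density. The paper's sketch rests on two structural facts: $d$ is $C^2$ off the cut locus of $\partial\Omega$, and that cut locus is Lebesgue-null, so the classical formula holds pointwise almost everywhere. You instead work at Alexandrov points, using the version of Alexandrov's theorem that identifies the pointwise Hessian with the density of the absolutely continuous part of $D^2d$. You touch $d$ from above by a local comparison function and discard focal points by Sard's theorem for the $C^1$ normal map. Your route needs neither the nullity of the cut locus nor any open-set regularity of $d$. The paper's route is shorter once that structure theory is granted.

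One point in Step~2 needs a line. The inverse function theorem alone does not tell you that $\delta_\Gamma$ agrees near $x$ with the second component of the local inverse of $\Phi$. This does hold, because $y$ is the unique nearest point of $x$ (since $d$ is differentiable at $x$), so nearest points on $\Gamma$ of nearby $z$ converge to $y$ and satisfy the normality condition there.

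You can avoid this entirely. Write the local inverse as $\Phi^{-1}(z)=(y'(z),\psi(z))$ and use $\psi$ itself as the comparison function. Then $\psi(z)=\abs{z-y'(z)}\ge d(z)$ near $x$, with equality at $x$. Differentiating $z=y'(z)-\psi(z)\,\nu(y'(z))$ gives $\nabla\psi=-\nu\circ y'$, hence $\psi\in C^2$, with no nearest-point argument needed.
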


For orientation we recall the mechanism, which is contained in the
distance-function theory of Crasta--Malusa~\cite{CrastaMalusa},
Mantegazza--Mennucci~\cite{MM}, Li--Nirenberg~\cite{LiNir}, and
Cannarsa--Sinestrari~\cite[Ch.~3]{CS}. For a bounded $C^2$ domain, $d$ is
locally semiconcave, so $-\Delta d$ is a locally finite signed Radon measure
whose singular part, carried by the cut locus of $\partial\Omega$ (a Lebesgue
null set), is non-negative; off the cut locus $d$ is $C^2$ and
\begin{equation}\label{eq:reg-set-formula}
-\Delta d(x)=\sum_{i=1}^{n-1}\frac{\kappa_i(y(x))}{1-d(x)\,\kappa_i(y(x))}
\ \ge\ H(y(x)),
\end{equation}
where $y(x)$ is the nearest boundary point, every denominator is positive, and
the inequality follows from
$\frac{\kappa_i}{1-t\kappa_i}-\kappa_i=\frac{t\kappa_i^2}{1-t\kappa_i}\ge0$.
Hence the absolutely continuous part of $-\Delta d$ has density at least
$H_0$ and the singular part is non-negative, which is $-\Delta d\ge H_0$ in
$\mathcal D'(\Omega)$.

\begin{remark}\label{rem:LLL}
Lewis, Li and Li~\cite[Theorem~1.7]{LewisLiLi2012} proved that, for a bounded
$C^2$ domain, $-\Delta d\ge0$ in $\mathcal D'(\Omega)$ if and only if
$\partial\Omega$ is weakly mean convex. Strict positivity in \eqref{eq:kappa}
is exactly what the barrier requires: $\theta d$ is a weak supersolution of
$-\Delta_pw=1$ if and only if $-\Delta d\ge\theta^{1-p}$ in
$\mathcal D'(\Omega)$, so if \eqref{eq:kappa} holds for no $\lambda>0$ then no
positive multiple of $d$ serves as a barrier. Uniform mean convexity cannot be
weakened to strict convexity: the boundary of $\{\sum_ix_i^4<1\}$ is strictly
convex but all its principal curvatures vanish where it meets a coordinate
axis, so $\inf_{\partial\Omega}H=0$. Convexity alone is not enough either: on
a rectangular box $-\Delta d$ is purely singular, concentrated on the ridge
set, and no $\lambda>0$ is admissible. Corollary~\ref{cor:convex} and
Theorem~\ref{thm:dbarrier} therefore apply under distinct, non-equivalent
hypotheses.
\end{remark}

\section{The gradient bound}\label{sec:gradient-bound}
The gradient maximum principle of Payne--Philippin~\cite{PP},
Sperb~\cite{Sperb}, and Kawohl~\cite{Kaw1987}, the last devoted specifically
to the location of gradient maxima for quasilinear problems, reduces the
global gradient bound to a boundary estimate; Mariconda and Treu~\cite{MT}
prove a gradient maximum principle for minima of convex integral functionals
under minimal hypotheses. The lemma below is therefore not new; for
completeness we give a proof adapted to the degenerate $p$-Laplacian, by
differentiating the equation on the noncritical set. What matters for us is
that it holds without any hypothesis on the sign of the boundary curvature and
with no constant depending on $p$.
\begin{lemma}[Gradient maximum principle]\label{lem:grad-max}
Let $\Omega\subset\R^n$ be a bounded $C^2$ domain and $p>1$. Then
\begin{equation}\label{eq:grad-max}
\max_{\overline\Omega}\abs{\nabla u_p}=\max_{\partial\Omega}\abs{\nabla u_p}.
\end{equation}
\end{lemma}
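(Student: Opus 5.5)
We argue by contradiction with the classical $P$-function device, applied to $P:=\abs{\nabla u_p}^2$ on the noncritical set. By Lieberman's $C^{1,\alpha}(\overline\Omega)$ regularity, $\abs{\nabla u_p}$ is continuous on $\overline\Omega$, so the maximum $L_p$ is attained at some $x_0\in\overline\Omega$. Suppose, for contradiction, that $L_p>\max_{\partial\Omega}\abs{\nabla u_p}$. Then the maximizer $x_0$ lies in $\Omega$, and $L_p>0$ because $u_p\not\equiv0$ by Lemma~\ref{lem:ball-comparison}. Let
\[
U:=\Bigl\{x\in\Omega:\ \abs{\nabla u_p(x)}>\tfrac12\bigl(L_p+\max_{\partial\Omega}\abs{\nabla u_p}\bigr)\Bigr\}.
\]
This is an open set with $\overline U\subset\Omega$ and $\nabla u_p\ne0$ on $\overline U$. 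On $U$ the function $u_p$ is $C^\infty$, and the equation can be written in nondivergence form,
\[
a_{ij}(\nabla u)\,u_{ij}=-\abs{\nabla u}^{2-p},
\qquad
a_{ij}(\xi)=\delta_{ij}+(p-2)\frac{\xi_i\xi_j}{\abs\xi^2}.
\]
This operator is uniformly elliptic on $U$ with smooth coefficients.

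\emph{Step 1: a differential inequality for $P$.} Differentiate the equation in $x_k$, multiply by $u_k$ and sum. This produces a linear elliptic inequality
\[
a_{ij}P_{ij}+b_iP_i\ \ge\ 0\quad\text{in }U,
\]
with $b$ locally bounded on $U$. The right-hand side $-\abs{\nabla u}^{2-p}$ is a function of $P$ alone. Its derivative therefore contributes a term proportional to $\nabla P\cdot\nabla u$, which can be absorbed into $b_iP_i$. The derivatives of $a_{ij}(\nabla u)$ also depend on $\nabla u_k=\tfrac12$-weighted combinations of second derivatives; the terms $\partial_k a_{ij}\,u_{ij}u_k$ are likewise expressed through $\nabla P$. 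What remains is $2a_{ij}u_{ki}u_{kj}\ge0$, which is nonnegative by ellipticity. This computation is the main obstacle. One has to check that every term cubic in $D^2u$ or containing $p-2$ is either a multiple of $\nabla P$ or has a sign.

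The cleanest route I would try first is to work in the divergence form of the linearized operator, where
\[
\partial_k\bigl(\abs{\nabla u}^{p-2}u_i\bigr)_i=0
\]
because the right-hand side is constant. Setting $A_{ij}:=\abs{\nabla u}^{p-2}a_{ij}$, this gives
\[
\partial_i\bigl(A_{ij}u_{kj}\bigr)=0 .
\]
Multiplying by $u_k$ yields
\[
\partial_i\bigl(A_{ij}P_j\bigr)=2A_{ij}u_{ki}u_{kj}\ge0 ,
\]
so $P$ is a subsolution of a divergence-form uniformly elliptic operator on $U$. This avoids the messy algebra entirely.

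\emph{Step 2: conclusion.} The open set $U$ contains $x_0$, where $P$ attains its maximum $L_p^2$ over $\overline\Omega$. Let $U_0$ be the connected component of $U$ containing $x_0$. Since $P$ is a smooth subsolution with an interior maximum on $U_0$, the strong maximum principle (Hopf form, for a divergence operator with smooth coefficients) forces $P\equiv L_p^2$ on $U_0$. Consequently $U_0$ is also relatively closed in $\{P>c\}$, where $c:=\tfrac14\bigl(L_p+\max_{\partial\Omega}\abs{\nabla u_p}\bigr)^2$: points of $\partial U_0$ inside $\Omega$ would carry the value $P=L_p^2>c$ by continuity, hence lie in $U$. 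So $U_0$ is a component of $\Omega$ which does not touch $\partial\Omega$. This is impossible: $\Omega$ is connected and $\overline U\subset\Omega$, so $U_0\ne\Omega$, and a nonempty subset of $\Omega$ that is both open and closed in $\Omega$ must equal $\Omega$.

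Alternatively, one can argue directly that $\abs{\nabla u_p}=L_p$ on $U_0$ and that $\partial U_0\cap\Omega$ is empty. This gives $U_0=\Omega$, which contradicts the continuity of $\nabla u_p$ up to $\partial\Omega$ together with the strict inequality assumed there. Either way we obtain \eqref{eq:grad-max}.
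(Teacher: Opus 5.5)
Your proof is correct and is essentially the paper's argument. The divergence-form identity $\partial_i(A_{ij}P_j)=2A_{ij}u_{ki}u_{kj}\ge0$ on a noncritical neighbourhood of $x_0$, followed by the strong maximum principle, is exactly what the paper does (with $P=2v$). The only difference is the endgame: the paper substitutes $P\equiv\text{const}$ back into that identity to get $D^2u_p=0$, hence $\Delta_pu_p=0$ near $x_0$, a purely local contradiction with $-\Delta_pu_p=1$ that rules out interior maxima outright; you instead propagate constancy through a connectedness argument on the superlevel set $U$ and contradict the strict boundary inequality, which never uses the nonzero right-hand side again but needs $\Omega$ connected and $\overline U\subset\Omega$.
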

\begin{proof}
The maximum is attained because $u_p\in C^{1}(\overline\Omega)$, and $L_p>0$
because $u_p\not\equiv0$. Suppose the maximum is attained at an interior
point $x_0$.
Then $\nabla u_p(x_0)\ne0$, so there is a connected open $V\ni x_0$ with
$V\Subset\Omega$ on which $\abs{\nabla u_p}$ is bounded away from $0$; on $V$
the equation is uniformly elliptic with smooth dependence on $\nabla u_p$ and
$u_p\in C^\infty(V)$. Write $A(\xi):=\abs{\xi}^{p-2}\xi$ and
\[
a_{ij}:=\frac{\partial A_i}{\partial\xi_j}(\nabla u_p)
=\abs{\nabla u_p}^{p-2}
\Bigl(\delta_{ij}+(p-2)\frac{(u_p)_i(u_p)_j}{\abs{\nabla u_p}^{2}}\Bigr),
\]
a symmetric matrix with eigenvalues $\abs{\nabla u_p}^{p-2}$ and
$(p-1)\abs{\nabla u_p}^{p-2}$, hence uniformly positive definite on compact
subsets of $V$ for every $p>1$. Differentiating
$\partial_iA_i(\nabla u_p)=-1$ with respect to $x_k$ gives
\begin{equation}\label{eq:diff-eq}
\partial_i\bigl(a_{ij}(u_p)_{jk}\bigr)=0
\qquad\text{in }V,\ k=1,\dots,n.
\end{equation}
Put $v:=\tfrac12\abs{\nabla u_p}^2$, so $v_j=\sum_k(u_p)_k(u_p)_{kj}$. Using
\eqref{eq:diff-eq},
\begin{equation}\label{eq:subsolution}
\partial_i\bigl(a_{ij}v_j\bigr)
=\sum_k\Bigl[(u_p)_k\,\partial_i\bigl(a_{ij}(u_p)_{kj}\bigr)
+a_{ij}(u_p)_{ki}(u_p)_{kj}\Bigr]
=\sum_ka_{ij}(u_p)_{ki}(u_p)_{kj}\ \ge\ 0
\end{equation}
in $V$. Thus $v$ is a subsolution in $V$ of a linear divergence-form uniformly
elliptic operator without zeroth-order term, and it attains at $x_0\in V$ its
maximum over $V$. By the strong maximum principle \cite[Theorem~8.19]{GT},
$v$ is constant on $V$. Then the left-hand side of \eqref{eq:subsolution}
vanishes, so $a_{ij}(u_p)_{ki}(u_p)_{kj}=0$ for every $k$, and positive
definiteness forces $D^2u_p=0$ on $V$. But then
$\Delta_pu_p=a_{ij}(u_p)_{ij}=0$ on $V$, contradicting $-\Delta_pu_p=1$.
For $p=2$ the argument is the classical observation that
$\Delta\abs{\nabla u}^2=2\abs{D^2u}^2\ge0$ when $\Delta u$ is constant;
localizing in $\{\abs{\nabla u_p}>0\}$ is what avoids the degeneracy of the
$p$-Laplacian at critical points.
\end{proof}

The exterior barrier agrees with the boundary datum at the tangency point, so
its inward normal derivative bounds that of $u_p$.
\begin{lemma}[Boundary gradient estimate]\label{lem:boundary-gradient}
Let $\Omega\subset\R^n$ be a bounded $C^2$ domain with uniform exterior ball
radius $r_e>0$. Then, for every $p>1$ and every $y\in\partial\Omega$,
$\abs{\nabla u_p(y)}^{\,p-1}\le K_\Omega$.
\end{lemma}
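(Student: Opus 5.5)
The plan is to compare $u_p$ with the exterior barrier $W_{y,R}$ of Lemma~\ref{lem:barrier}, anchored at the given boundary point $y$. For a $C^2$ domain, Definition~\ref{def:ext-ball} forces $c_y=y+r_e\nu(y)$. Lemma~\ref{lem:barrier} gives $u_p\le W_{y,R}$ in $\Omega$ for every $R>D+r_e$. Both functions vanish at $y$: $u_p(y)=0$ because $u_p\in C^{1,\alpha}(\overline\Omega)$ (Lieberman) with zero boundary values, and $W_{y,R}(y)=0$ because $\abs{y-c_y}=r_e$. Moreover $u_p\ge0$ by Lemma~\ref{lem:positivity}.

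Next I would read off the gradient at $y$. Since $u_p\ge0$ in $\Omega$ and $u_p=0$ on the $C^1$ boundary, the tangential derivatives of $u_p$ at $y$ vanish. Hence $\nabla u_p(y)=-\abs{\nabla u_p(y)}\,\nu(y)$, or $\nabla u_p(y)=0$, in which case there is nothing to prove. Along the inward normal segment $x_t:=y-t\nu(y)$, $0<t$ small, we have $x_t\in\Omega$ and $\abs{x_t-c_y}=r_e+t$. The barrier comparison and the monotonicity \eqref{eq:F-decreasing} of $F_R$ then give
\[
0\le u_p(x_t)\le W_{y,R}(x_t)=\int_{r_e}^{r_e+t}F_R(s)^{1/(p-1)}\,ds\le F_R(r_e)^{1/(p-1)}\,t .
\]
Dividing by $t$ and letting $t\downarrow0$, the $C^1$ regularity up to the boundary gives
\[
\abs{\nabla u_p(y)}=-\nabla u_p(y)\cdot\nu(y)=\lim_{t\downarrow0}\frac{u_p(x_t)}{t}\le F_R(r_e)^{1/(p-1)}.
\]
Letting $R\downarrow D+r_e$ gives $F_R(r_e)\to K_\Omega$, and raising both sides to the power $p-1$ yields the claim.

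The main point to verify is the identification of $\abs{\nabla u_p(y)}$ with the inward normal difference quotient. This needs three ingredients: continuity of $\nabla u_p$ up to $\partial\Omega$, which is available from Lieberman on $C^2$ domains; the vanishing of tangential derivatives, which follows by differentiating $u_p=0$ along $C^1$ boundary curves through $y$ using the continuous extension of $\nabla u_p$; and the fact that $x_t\in\Omega$ for small $t$, which holds since $\nu(y)$ is the outer normal of a $C^2$ boundary. A shortcut also works: Proposition~\ref{thm:upper} already gives $u_p(x_t)\le K_\Omega^{1/(p-1)}d(x_t)\le K_\Omega^{1/(p-1)}t$, so the limiting argument can be applied directly to that bound.
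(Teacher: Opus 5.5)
Your proof is correct and follows the paper's argument essentially step by step. You compare $u_p$ with $W_{y,R}$ along the inward normal from $y$, bound the difference quotient by $F_R(r_e)^{1/(p-1)}$, identify $\abs{\nabla u_p(y)}$ with the inward normal derivative using vanishing tangential derivatives and $u_p\ge0$, and let $R\downarrow D+r_e$. Your shortcut through Proposition~\ref{thm:upper} and $d(x_t)\le t$ is also valid and avoids the limit in $R$.
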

\begin{proof}
Let $R>D+r_e$ and keep the notation of Lemma~\ref{lem:barrier}. Let
$\tau:=-\nu(y)$ be the inward unit normal. For small $t>0$ the point $y+t\tau$
lies in $\Omega$ with $\abs{y+t\tau-c_y}=r_e+t$, and $u_p(y)=W_{y,R}(y)=0$, so
\[
\frac{u_p(y+t\tau)}{t}\ \le\ \frac{W_{y,R}(y+t\tau)}{t}
=\frac1t\int_{r_e}^{r_e+t}F_R(s)^{1/(p-1)}\,ds.
\]
Letting $t\downarrow0$ and using $u_p\in C^1(\overline\Omega)$ together with
the continuity of $F_R$ gives $\partial_\tau u_p(y)\le F_R(r_e)^{1/(p-1)}$.
Because $u_p$ vanishes on $\partial\Omega$ its tangential derivatives vanish
there, and $u_p\ge0$ gives $\partial_\tau u_p(y)\ge0$; hence
$\abs{\nabla u_p(y)}=\partial_\tau u_p(y)\le F_R(r_e)^{1/(p-1)}$. Let
$R\downarrow D+r_e$.
\end{proof}
\begin{proof}[Proof of Theorem~\ref{thm:gradient-bound}]
Combine Lemma~\ref{lem:grad-max} with Lemma~\ref{lem:boundary-gradient}.
\end{proof}

\begin{remark}\label{rem:1d}
For $n=1$ and $\Omega=(0,L)$, put $a:=L/2$. Integrating the equation once and
using $u_p'(a)=0$ gives $\abs{u_p'}^{p-2}u_p'=a-t$, so
$u_p(t)=\frac{p-1}{p}\bigl(a^{\gamma}-\abs{t-a}^{\gamma}\bigr)$ and
$\norm{u_p'}_{L^\infty(0,L)}^{\,p-1}=a=L/2$, whereas $D=L$ and
$K_\Omega=L$ for every choice of $r_e$. Thus \eqref{eq:gradient-bound} holds
in dimension one with room to spare.
\end{remark}

Theorem~\ref{thm:gradient-bound}, Lemma~\ref{lem:segment}, and the uniform
convergence of $u_p$ determine the limit of the gradient supremum; that this
limit is $1$ does not imply uniform convergence of $\nabla u_p$ to $\nabla d$,
see Proposition~\ref{prop:ball-gradient}.
\begin{corollary}[Limit of the gradient supremum]\label{cor:gradient-limit}
Let $\Omega$ be a bounded $C^2$ domain. Then
$\norm{\nabla u_p}_{L^\infty(\Omega)}\to1$ as $p\to\infty$.
\end{corollary}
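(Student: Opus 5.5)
The plan is to sandwich $L_p=\norm{\nabla u_p}_{L^\infty(\Omega)}$ between an upper bound coming from Theorem~\ref{thm:gradient-bound} and a lower bound coming from Lemma~\ref{lem:segment} together with uniform convergence.

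\emph{Upper bound.} A bounded $C^2$ domain admits some exterior radius $r_e>0$. Theorem~\ref{thm:gradient-bound} then gives $L_p\le K_\Omega^{1/(p-1)}$ for every $p>1$. Since $K_\Omega$ is a fixed positive number independent of $p$, $K_\Omega^{1/(p-1)}\to1$, and hence $\limsup_{p\to\infty}L_p\le1$. This holds whether $K_\Omega$ lies above or below $1$.

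\emph{Lower bound.} By Lieberman's regularity, $u_p\in C^1(\overline\Omega)$. It vanishes on $\partial\Omega$ by Morrey's inequality for $p>n$, or directly from the $C^{1,\alpha}(\overline\Omega)$ regularity with zero trace. Also $L_p<\infty$. Lemma~\ref{lem:segment} therefore gives $u_p(x)\le L_p\,d(x)$ for all $x\in\Omega$. Choose $x_0$ with $d(x_0)=\rho_\Omega>0$. Then $L_p\ge u_p(x_0)/\rho_\Omega$. By Theorem~\ref{thm:main}, or simply Proposition~\ref{thm:lower}, we have $u_p(x_0)\ge\rho_\Omega-C(n,\rho_\Omega)/p$. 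Hence
\[
L_p\ \ge\ 1-\frac{C(n,\rho_\Omega)}{\rho_\Omega\,p},
\]
and so $\liminf_{p\to\infty}L_p\ge1$. Alternatively, evaluating the pointwise bound \eqref{eq:pointwise-lower} at $x_0$ gives $L_p\ge\frac{p-1}{p}n^{-1/(p-1)}\rho_\Omega^{1/(p-1)}\to1$, which avoids the additive constant altogether.

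Combining the two bounds gives $L_p\to1$. The only delicate point is checking the hypotheses of Lemma~\ref{lem:segment}, namely continuity up to the boundary with zero boundary values and $C^1$ regularity inside; both are supplied by the regularity results recalled in Section~\ref{sec:prelim}. No uniformity in $p$ of these regularity results is needed, because each estimate is applied at a fixed $p$ and only the explicit constants depend on $p$.
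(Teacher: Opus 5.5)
Your proof is correct and follows the paper's argument. The upper bound comes from Theorem~\ref{thm:gradient-bound}, and the lower bound comes from Lemma~\ref{lem:segment} evaluated at a point of maximal distance. The only difference is that you invoke just the lower estimate (Proposition~\ref{thm:lower} or the inscribed-ball bound \eqref{eq:pointwise-lower}), whereas the paper cites the two-sided Theorem~\ref{thm:universal-rate}; this is harmless, since only $u_p(x_0)\ge\rho_\Omega-o(1)$ is needed.
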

\begin{proof}
Theorem~\ref{thm:gradient-bound} gives
$\limsup_pL_p\le\lim_p(K_\Omega^+)^{1/(p-1)}=1$. Conversely, fix $x_0$ with
$d(x_0)=\rho_\Omega$; Lemma~\ref{lem:segment} gives
$L_p\ge u_p(x_0)/\rho_\Omega$, and $u_p(x_0)\to\rho_\Omega$ by
Theorem~\ref{thm:universal-rate}.
\end{proof}

\section{Consequences for related quantities}\label{sec:consequences}
Some of the literature works with the normalized torsion function
$v_p:=u_p/k_p$, $k_p:=\norm{\nabla u_p}_{L^p(\Omega)}$, for which
$\norm{\nabla v_p}_{L^p(\Omega)}=1$ and $-\Delta_pv_p=k_p^{1-p}$. The
mean-to-max ratio of the $p$-torsion function, studied by Briani and
Bucur~\cite{BB} in connection with honeycomb structures, is
\[
\Phi_p(\Omega):=\frac{1}{\abs{\Omega}\,\norm{u_p}_{L^\infty(\Omega)}}
\int_\Omega u_p\,dx,
\qquad
\Phi_\infty(\Omega):=\frac{1}{\abs{\Omega}\,\rho_\Omega}\int_\Omega d\,dx.
\]
\begin{corollary}[Normalized torsion and mean-to-max ratio]\label{cor:consequences}
Suppose $\norm{u_p-d}_{L^\infty(\Omega)}\le C_1/p$ for all $p\ge p_1$. Then
there are $p_2=p_2(\Omega,C_1,p_1)$ and $C=C(\Omega,C_1)>0$ such that, for
all $p\ge p_2$,
\[
\norm{v_p-d}_{L^\infty(\Omega)}\le\frac{C}{p},
\qquad
\abs{\Phi_p(\Omega)-\Phi_\infty(\Omega)}\le\frac{C}{p}.
\]
With $C_1$ and $p_1$ from Theorem~\ref{thm:universal-rate}, $p_2$ and $C$
depend only on $n$ and $\Omega$.
\end{corollary}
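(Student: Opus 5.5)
The plan is to derive everything from the hypothesis $\norm{u_p-d}_{L^\infty}\le C_1/p$ by controlling $k_p$ and $\norm{u_p}_{L^\infty}$.

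\emph{Step 1: identify $k_p$.} Testing \eqref{eq:weak} with $\varphi=u_p$ gives
\[
k_p^p=\int_\Omega\abs{\nabla u_p}^p\,dx=\int_\Omega u_p\,dx .
\]
Set $I:=\int_\Omega d\,dx>0$. The hypothesis gives $\bigl|\int_\Omega u_p-I\bigr|\le\abs\Omega C_1/p$. For $p\ge p_2$ large enough that $\abs\Omega C_1/p\le I/2$, this yields $\abs{\log\int_\Omega u_p-\log I}\le C/p$. Hence
\[
\log k_p=\frac1p\log\int_\Omega u_p=\frac{\log I}{p}+O(p^{-2}),
\]
so $\abs{k_p-1}\le C/p$ and $\abs{1/k_p-1}\le C/p$, with $C$ depending on $I$, $\abs\Omega$ and $C_1$.

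\emph{Step 2: the normalized torsion function.} Write
\[
v_p-d=\frac{u_p-d}{k_p}+\Bigl(\frac1{k_p}-1\Bigr)d .
\]
Each term is $O(1/p)$ in $L^\infty$, using $k_p\ge1/2$ for large $p$ and $d\le\rho_\Omega$.

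\emph{Step 3: the mean-to-max ratio.} Let $m_p:=\norm{u_p}_{L^\infty}$. Since $\abs{\sup u_p-\sup d}\le\norm{u_p-d}_{L^\infty}$, we get $\abs{m_p-\rho_\Omega}\le C_1/p$, so $m_p\ge\rho_\Omega/2$ for large $p$. Then
\[
\Phi_p-\Phi_\infty=\frac{1}{\abs\Omega}\Bigl(\frac{\int u_p-I}{m_p}+I\Bigl(\frac1{m_p}-\frac1{\rho_\Omega}\Bigr)\Bigr),
\]
and each term is bounded by $C/p$.

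\emph{Constants.} The final sentence of the statement follows by inserting the constants of Theorem~\ref{thm:universal-rate}, which depend only on $n$ and $D$. The only real care needed is in Step 1: extracting the $p$-th root requires $\int u_p$ to be bounded away from $0$ and $\infty$, which the lower bound $I/2$ provides. This threshold is where $p_2$ comes from. Nothing here is a genuine obstacle.
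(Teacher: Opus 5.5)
Your proposal is correct and follows essentially the same route as the paper: the energy identity $k_p^p=\int_\Omega u_p$, a two-sided bound on $\int_\Omega u_p$ to get $\abs{\log k_p}\le C/p$, and the same splitting of $\Phi_p-\Phi_\infty$ using $\norm{u_p}_{L^\infty}\ge\rho_\Omega/2$ for $p$ large. The only difference is cosmetic: you write $v_p-d=(u_p-d)/k_p+(k_p^{-1}-1)d$, whereas the paper writes $v_p-d=(k_p^{-1}-1)u_p+(u_p-d)$.
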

\begin{proof}
Testing \eqref{eq:weak} with $u_p$ gives $k_p^p=\int_\Omega u_p>0$. Put
$I:=\int_\Omega d>0$ and
\[
p_2:=\max\Bigl\{p_1,\ \frac{2C_1\abs\Omega}{I},\ \frac{2C_1}{\rho_\Omega}\Bigr\}.
\]
For $p\ge p_2$ the hypothesis gives $\tfrac12I\le\int_\Omega u_p\le2I$ and
$\norm{u_p}_{L^\infty}\ge\rho_\Omega/2$, hence
$\abs{\log k_p}=\frac1p\abs{\log\int_\Omega u_p}\le C/p$ and
$\abs{k_p^{-1}-1}\le C/p$. Since $\norm{u_p}_{L^\infty}\le\rho_\Omega+C_1/p$,
\[
\norm{v_p-d}_\infty\le\abs{k_p^{-1}-1}\norm{u_p}_\infty
+\norm{u_p-d}_\infty\le\frac{C}{p}.
\]
For the ratio, write $\Phi_p=m_p/M_p$ and $\Phi_\infty=m_\infty/M_\infty$
with $m_p:=\abs{\Omega}^{-1}\int_\Omega u_p$, $M_p:=\norm{u_p}_\infty$,
$m_\infty:=\abs{\Omega}^{-1}\int_\Omega d$ and $M_\infty:=\rho_\Omega$. Then
$\abs{m_p-m_\infty}\le C_1/p$, $\abs{M_p-M_\infty}\le C_1/p$,
$M_p\ge\rho_\Omega/2$ for $p\ge p_2$, and $m_\infty\le M_\infty$, so
\[
\abs{\Phi_p-\Phi_\infty}
\le\frac{\abs{m_p-m_\infty}}{M_p}
+m_\infty\frac{\abs{M_p-M_\infty}}{M_pM_\infty}
\le\frac{4C_1}{\rho_\Omega\,p}. \qedhere
\]
\end{proof}

\section{Quantitative convergence of the gradients}\label{sec:gradients}
The estimates of this section quantify the strong $L^q$ convergence of
$\nabla u_p$ proved in~\cite{BLR} by combining the uniform error of
Theorem~\ref{thm:universal-rate} with the finite-measure structure of
$-\Delta d$ from Lemma~\ref{lem:tv-exterior-ball}. The following monotonicity
inequality converts the energy defect into an $L^2$ gradient defect, using
$\abs{\nabla d}=1$ almost everywhere.
\begin{lemma}\label{lem:monotonicity}
Let $p\ge2$, $\xi\in\R^n$ and $\abs{\eta}=1$. Then
\begin{equation}\label{eq:monotonicity}
\bigl(\abs{\xi}^{p-2}\xi-\eta\bigr)\cdot(\xi-\eta)
\ \ge\ \tfrac12\abs{\xi-\eta}^{2}.
\end{equation}
\end{lemma}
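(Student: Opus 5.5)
The plan is to reduce \eqref{eq:monotonicity} to a one-variable inequality by expanding both sides in terms of $s:=\abs\xi$ and $c:=\xi\cdot\eta$. By Cauchy--Schwarz and $\abs\eta=1$ we have $c\in[-s,s]$.

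First I expand, using $\abs\eta=1$:
\[
\bigl(\abs{\xi}^{p-2}\xi-\eta\bigr)\cdot(\xi-\eta)=s^p+1-c\bigl(s^{p-2}+1\bigr),
\qquad
\tfrac12\abs{\xi-\eta}^2=\tfrac12\bigl(s^2-2c+1\bigr).
\]
Subtracting gives the defect
\[
G(s,c):=s^p-\tfrac12 s^2+\tfrac12-c\,s^{p-2}.
\]
I must show $G(s,c)\ge0$ on the admissible range. $G$ is affine in $c$ with slope $-s^{p-2}\le0$, so its minimum over $c\in[-s,s]$ is attained at $c=s$. This is the configuration where $\xi$ is a non-negative multiple of $\eta$.

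It therefore suffices to show that
\[
g(s):=G(s,s)=s^p-s^{p-1}-\tfrac12(s^2-1)=(s-1)\Bigl(s^{p-1}-\tfrac{s+1}2\Bigr)\ \ge\ 0
\qquad(s\ge0).
\]
This is where the hypothesis $p\ge2$ enters, through the exponent $p-1\ge1$. I split into two cases.
\begin{itemize}
\item If $s\ge1$, then $s^{p-1}\ge s\ge\frac{s+1}2$, so both factors are non-negative.
\item If $0\le s<1$, then $s^{p-1}\le s\le\frac{s+1}2$, so both factors are non-positive.
\end{itemize}
In either case the product is non-negative, which proves \eqref{eq:monotonicity}.

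The only step needing care is the factorization of $g$ and the sign comparison of $s^{p-1}$ with $\frac{s+1}2$ on each side of $s=1$. The reduction to $c=s$ is immediate from linearity in $c$. Equality holds at $\xi=\eta$, where $s=c=1$, so the constant $\tfrac12$ is not contradicted there.
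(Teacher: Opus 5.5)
Your proof is correct and follows the paper's argument. Both expand the two sides, note that the defect is affine in $\xi\cdot\eta$ with non-positive slope so that the worst case is $\xi$ a non-negative multiple of $\eta$, and then check the sign of $(s-1)\bigl(s^{p-1}-\tfrac{s+1}{2}\bigr)$ on either side of $s=1$; the paper works with twice this quantity, and your use of the unnormalized $c=\xi\cdot\eta\in[-s,s]$ simply absorbs the case $\xi=0$ that the paper treats separately.
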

\begin{proof}
For $\xi=0$ the left-hand side equals $1$ and the right-hand side $1/2$, so
assume $r:=\abs{\xi}>0$ and put $c:=(\xi\cdot\eta)/r\in[-1,1]$. Twice the
left-hand side minus $\abs{\xi-\eta}^2$ equals $2r^p+1-r^2-2r^{p-1}c$, which
is minimized over $c\in[-1,1]$ at $c=1$, where it equals
$2r^p-2r^{p-1}+1-r^2=(r-1)\bigl(2r^{p-1}-r-1\bigr)$. For $r\ge1$ we have
$r^{p-1}\ge r$, so both factors are non-negative; for $0<r\le1$ we have
$r^{p-1}\le r$, so both are non-positive. In either case the product is
non-negative.
\end{proof}

\begin{theorem}[Quantitative gradient convergence]\label{thm:gradient-rate}
Let $\Omega\subset\R^n$ be a bounded domain such that $\mu:=-\Delta d$
is a signed Radon measure on $\Omega$ with $\abs{\mu}(\Omega)<\infty$. Then,
for all sufficiently large $p$,
\begin{equation}\label{eq:L2-rate}
\norm{\nabla u_p-\nabla d}_{L^2(\Omega)}\ \le\ \frac{C}{\sqrt p},
\qquad C=C\bigl(n,D,\abs\Omega,\abs\mu(\Omega)\bigr),
\end{equation}
and consequently
$\norm{\nabla u_p-\nabla d}_{L^q(\Omega)}\le C\abs\Omega^{1/q-1/2}p^{-1/2}$ for
each $q\in[1,2]$. For each fixed $q\in[2,\infty)$ there exist
$p_q=p_q(n,D,q)>\max\{n,q\}$ and $C_q=C_q(n,D,\abs\Omega,\abs\mu(\Omega))$
such that, for every $p\ge p_q$,
\begin{equation}\label{eq:Lq-rate}
\norm{\nabla u_p-\nabla d}_{L^q(\Omega)}\ \le\ C_q\,p^{-1/q}.
\end{equation}
In particular, by Lemma~\ref{lem:tv-exterior-ball}, these conclusions hold on
every bounded $C^1$ domain satisfying \eqref{eq:ext-ball}, and hence on every
bounded $C^2$ domain.
\end{theorem}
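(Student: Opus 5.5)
The plan is to test the weak formulation of $u_p$ and the distributional definition of $\mu=-\Delta d$ with the same function $\varphi:=u_p-d$. This is admissible in \eqref{eq:weak} by Lemma~\ref{lem:d-in-W1p}. Subtracting the two identities gives
\[
\int_\Omega\bigl(\abs{\nabla u_p}^{p-2}\nabla u_p-\nabla d\bigr)\cdot\nabla(u_p-d)\,dx
=\int_\Omega(u_p-d)\,dx-\int_\Omega(u_p-d)\,d\mu .
\]
By Lemma~\ref{lem:monotonicity} with $\eta=\nabla d$, which is a unit vector a.e., the left-hand side is at least $\frac12\norm{\nabla u_p-\nabla d}_{L^2}^2$. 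The right-hand side is at most $\norm{u_p-d}_{L^\infty}\bigl(\abs\Omega+\abs\mu(\Omega)\bigr)\le C(n,D)\bigl(\abs\Omega+\abs\mu(\Omega)\bigr)/p$ by Theorem~\ref{thm:universal-rate}. This yields \eqref{eq:L2-rate}, and H\"older's inequality then gives the $L^q$ bound for $q\le2$.

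One technical point must be justified: pairing $\mu$ with $u_p-d$. That function is only $W^{1,p}_0\cap C(\overline\Omega)$ for $p>n$, not $C_c^\infty$. I would approximate it by $\varphi_\varepsilon:=\psi_\varepsilon(u_p-d)$, where $\psi_\varepsilon$ is the cutoff $\min\{(d-\varepsilon)^+/\varepsilon,1\}$ from Lemma~\ref{lem:tv-exterior-ball}, and then mollify. For the identity $\int\nabla d\cdot\nabla\varphi=\int\varphi\,d\mu$, the extra term is $\int (u_p-d)\nabla d\cdot\nabla\psi_\varepsilon$. This is bounded by $\varepsilon^{-1}\sup_{\{d<2\varepsilon\}}\abs{u_p-d}\,\abs{\{\varepsilon<d<2\varepsilon\}}$. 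Using \eqref{eq:universal-upper} and the lower bound, $\abs{u_p-d}\le C d^{\beta_p}$ near the boundary, so the term is $O(\varepsilon^{\beta_p-1}\abs{\{d<2\varepsilon\}})$. Here $\beta_p<1$, so I cannot rely on a layer estimate, which would need regularity.

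Better: I would avoid that term altogether by exploiting signs. Decompose $\mu=\mu^+-\mu^-$ and use the monotone limit $\psi_\varepsilon\uparrow1$ with dominated convergence, since $u_p-d$ is bounded and continuous and $\abs\mu$ is finite. The gradient term $\int\nabla d\cdot\nabla\psi_\varepsilon\,(u_p-d)$ can be rewritten as $\int \nabla d\cdot\nabla\bigl(\psi_\varepsilon(u_p-d)\bigr)-\int\psi_\varepsilon\nabla d\cdot\nabla(u_p-d)$, and the first piece is exactly the pairing with $\mu$. So the identity for $\varphi=u_p-d$ follows once $\int\psi_\varepsilon\,d\mu\to\int d\mu$ and $\int\psi_\varepsilon\nabla d\cdot\nabla(u_p-d)\to\int\nabla d\cdot\nabla(u_p-d)$. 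Both hold by dominated convergence, because $\nabla(u_p-d)\in L^1$. I expect this density step to be the main obstacle.

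For $q\ge2$, I would interpolate between $L^2$ and a large exponent. Using $\abs{\nabla u_p-\nabla d}^q\le\abs{\nabla u_p-\nabla d}^2(\abs{\nabla u_p}+1)^{q-2}$, it suffices to bound $\int\abs{\nabla u_p}^{p}$ uniformly. Testing with $u_p$ gives $\int\abs{\nabla u_p}^p=\int u_p\le C(n,D)\abs\Omega$. Then H\"older with exponents $p/(q-2)$ and its conjugate on the product requires a higher power of the difference; a cleaner route is Young's inequality splitting on the sets $\{\abs{\nabla u_p}\le 2\}$ and $\{\abs{\nabla u_p}>2\}$. On the first set the integrand is at most $3^{q-2}\abs{\nabla u_p-\nabla d}^2$, contributing $C/p$. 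On the second, $\abs{\nabla u_p-\nabla d}^q\le (3/2)^q\abs{\nabla u_p}^q\le (3/2)^q 2^{q-p}\abs{\nabla u_p}^p$, which is exponentially small once $p\ge p_q$.

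This gives $\norm{\nabla u_p-\nabla d}_{L^q}^q\le C_q/p$, that is, the rate $p^{-1/q}$ in \eqref{eq:Lq-rate}. The threshold $p_q>\max\{n,q\}$ is chosen so that Theorem~\ref{thm:universal-rate} applies and $2^{q-p}p\le1$. The final sentence of the theorem then follows from Lemma~\ref{lem:tv-exterior-ball}.
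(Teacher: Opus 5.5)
Your overall architecture is the paper's. You test \eqref{eq:weak} and the definition of $\mu$ with $e_p:=u_p-d$, apply Lemma~\ref{lem:monotonicity}, bound the right-hand side by $(\abs\Omega+\abs\mu(\Omega))\norm{e_p}_{L^\infty}$ via Theorem~\ref{thm:universal-rate}, and for $q\ge2$ split on $\{\abs{\nabla u_p}>2\}$ using the energy identity. The gap is in the step you single out, the identity $\int_\Omega\nabla d\cdot\nabla e_p\,dx=\int_\Omega e_p\,d\mu$: your ``better'' argument for it is circular. The product rule and the definition of $\mu$, applied to the compactly supported function $\psi_\varepsilon e_p$, give
\[
\int_\Omega\psi_\varepsilon\,\nabla d\cdot\nabla e_p\,dx+\int_\Omega e_p\,\nabla d\cdot\nabla\psi_\varepsilon\,dx=\int_\Omega\psi_\varepsilon e_p\,d\mu .
\]
Dominated convergence in the two terms you control shows only that $\lim_{\varepsilon\to0}\int_\Omega e_p\,\nabla d\cdot\nabla\psi_\varepsilon\,dx$ exists. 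Its value is $\int_\Omega e_p\,d\mu-\int_\Omega\nabla d\cdot\nabla e_p\,dx$, which is exactly the quantity you need to show vanishes. Your rewriting of the gradient term is this same identity read backwards. So the boundary-layer term $\varepsilon^{-1}\int_{\{\varepsilon<d<2\varepsilon\}}e_p\,dx$ has not been avoided. As you noted yourself, $\abs{e_p}\le Cd^{\beta_p}$ with $\beta_p<1$ does not control it without a layer estimate, and a general bounded domain does not provide one. The decomposition $\mu=\mu^+-\mu^-$ plays no role.

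The missing idea is that no cutoff is needed. By Lemma~\ref{lem:d-in-W1p}, $e_p$ already lies in $W^{1,p}_0(\Omega)$. Take $\phi_j\in C_c^\infty(\Omega)$ with $\phi_j\to e_p$ in $W^{1,p}(\Omega)$, extend by zero to $\R^n$, and apply Morrey's inequality there ($p>n$). This gives $\phi_j\to e_p$ uniformly, to the continuous representative. Then $\int_\Omega\phi_j\,d\mu\to\int_\Omega e_p\,d\mu$, since $\abs{\int_\Omega(\phi_j-e_p)\,d\mu}\le\norm{\phi_j-e_p}_{L^\infty}\abs\mu(\Omega)$. Also $\int_\Omega\nabla d\cdot\nabla\phi_j\,dx\to\int_\Omega\nabla d\cdot\nabla e_p\,dx$, since $\nabla d\in L^\infty$ and $\Omega$ is bounded. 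This is how the paper proves \eqref{eq:test2}. The Sobolev zero trace carries all the boundary information, and Morrey turns $W^{1,p}$ density into the uniform convergence that pairing with a finite, possibly singular, measure requires. With this step replaced, the rest of your proposal is correct and coincides with the paper's proof, including the $q\ge2$ splitting and the choice of $p_q$.
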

\begin{proof}
Take $p>\max\{n,2\}$ and put $e_p:=u_p-d$. By Lemma~\ref{lem:d-in-W1p},
$e_p\in W^{1,p}_0(\Omega)$; since $p>n$, its extension by zero to $\R^n$ is
continuous by the Morrey embedding. Testing \eqref{eq:weak} with $e_p$,
\begin{equation}\label{eq:test1}
\int_\Omega\abs{\nabla u_p}^{p-2}\nabla u_p\cdot\nabla e_p\,dx
=\int_\Omega e_p\,dx.
\end{equation}
Since $\abs{\nabla d}=1$ almost everywhere,
$\abs{\nabla d}^{p-2}\nabla d=\nabla d$, and the finite-measure hypothesis
gives
\begin{equation}\label{eq:test2}
\int_\Omega\nabla d\cdot\nabla e_p\,dx=\int_\Omega e_p\,d\mu .
\end{equation}
Identity \eqref{eq:test2} holds by definition for $e_p\in C_c^\infty(\Omega)$;
for general $e_p$, approximate in $W^{1,p}_0(\Omega)$ by functions in
$C_c^\infty(\Omega)$, extend the approximants by zero to $\R^n$, and apply the
Morrey embedding there; since $p>n$ this gives uniform convergence as well, so
both the Lebesgue integral and the pairing against the finite measure $\mu$
pass to the limit. Subtracting \eqref{eq:test2} from \eqref{eq:test1},
\[
I_p:=\int_\Omega
\bigl(\abs{\nabla u_p}^{p-2}\nabla u_p-\nabla d\bigr)\cdot
\bigl(\nabla u_p-\nabla d\bigr)\,dx
=\int_\Omega e_p\,dx-\int_\Omega e_p\,d\mu ,
\]
and $I_p\ge0$ by Lemma~\ref{lem:monotonicity}. Theorem~\ref{thm:universal-rate}
gives
\[
0\ \le\ I_p\ \le\ \bigl(\abs{\Omega}+\abs{\mu}(\Omega)\bigr)
\norm{e_p}_{L^\infty(\Omega)}\ \le\ \frac{C(n,D)\bigl(\abs{\Omega}+\abs{\mu}(\Omega)\bigr)}{p}.
\]
Applying \eqref{eq:monotonicity} pointwise with $\xi=\nabla u_p$ and
$\eta=\nabla d$ gives $\tfrac12\norm{\nabla u_p-\nabla d}_{L^2}^2\le I_p$,
which is \eqref{eq:L2-rate}; for $1\le q\le2$, H\"older's inequality gives
the stated bound.

Fix $q\ge2$ and take $p\ge q$. Write $h_p:=\abs{\nabla u_p-\nabla d}$ and
$E_p:=\{\abs{\nabla u_p}>2\}$. On $\Omega\setminus E_p$ one has $h_p\le3$;
on $E_p$ one has $h_p\le\tfrac32\abs{\nabla u_p}$ and
$\abs{\nabla u_p}^q\le2^{q-p}\abs{\nabla u_p}^p$. Hence
\[
\int_\Omega h_p^q\,dx
\le 3^{q-2}\int_\Omega h_p^2\,dx
 +(3/2)^q\int_{E_p}\abs{\nabla u_p}^q\,dx
\le 3^{q-2}\norm{\nabla u_p-\nabla d}_{L^2}^2
 +3^q2^{-p}\int_\Omega\abs{\nabla u_p}^p\,dx.
\]
The energy identity $\int_\Omega\abs{\nabla u_p}^p=\int_\Omega u_p$ and
Theorem~\ref{thm:universal-rate} bound the last integral uniformly for large
$p$ in terms of $n$, $D$ and $\abs\Omega$, and $2^{-p}=O(1/p)$, so the
right-hand side is at most $C_q/p$. Taking $q$-th roots proves
\eqref{eq:Lq-rate}.
\end{proof}

\begin{remark}\label{rem:gradient-not-sharp}
  We do not claim that the exponents in \eqref{eq:L2-rate} and \eqref{eq:Lq-rate} are  sharp. On a ball, Proposition~\ref{prop:ball-gradient} gives
$\norm{\nabla u_p-\nabla d}_{L^q}=\Theta(1/p)$ in every dimension, better than
\eqref{eq:L2-rate} by a factor $p^{1/2}$. The loss is in
Lemma~\ref{lem:monotonicity}: the differential of $A(\xi)=\abs\xi^{p-2}\xi$
has eigenvalue $\abs\xi^{p-2}$ transversally to $\xi$ but $(p-1)\abs\xi^{p-2}$
along $\xi$ (see the proof of Lemma~\ref{lem:grad-max}), and
\eqref{eq:monotonicity} retains only the smaller one. It is therefore off by
a factor of order $p$ exactly when $\xi-\eta$ is nearly parallel to $\eta$:
for $\xi=(1-\varepsilon)\eta$ with $(p-1)\varepsilon$ small, the left-hand
side of \eqref{eq:monotonicity} is
$\varepsilon\bigl(1-(1-\varepsilon)^{p-1}\bigr)\approx(p-1)\varepsilon^{2}$
while the right-hand side is $\varepsilon^{2}/2$. On a ball $\nabla u_p-\nabla d$
is parallel to $\nabla d$ everywhere, so the ball only exhibits this loss;
we do not know whether the exponent $1/2$ in \eqref{eq:L2-rate} is attained
on some domain.
\end{remark}
\section{The ball: exact error, second order, and gradients}\label{sec:ball}
The Dirichlet $p$-torsion profile on a ball is given explicitly in van den
Berg--Bucur~\cite[Section~4.2]{VdBB2014}, and the first two expansion
coefficients appear in Fayolle--Belyaev~\cite[Appendix~B]{FB2026}. We use the
profile to compute the exact uniform error, establish a remainder uniform on
the closed ball, and determine the gradient asymptotics.
\begin{theorem}[Exact error and uniform asymptotics on the ball]\label{thm:ball}
Let $\Omega=B(0,1)\subset\R^n$, $n\ge1$, write $r=\abs{x}$, $d(r)=1-r$ and
$\ell:=\log n$. Then
\begin{equation}\label{eq:ball-profile}
u_p(r)=A_{p,n}\bigl(1-r^{\gamma}\bigr),
\qquad
A_{p,n}:=\frac{p-1}{p}\,n^{-1/(p-1)},
\end{equation}
one has $u_p\le d$ on $\overline{B(0,1)}$, and
\begin{equation}\label{eq:ball-exact}
\norm{u_p-d}_{L^\infty(B(0,1))}=1-A_{p,n}
\quad\text{for every }p>1,
\qquad
1-A_{p,n}
=\frac{1+\ell}{p}-\frac{\ell^{2}}{2p^{2}}+O\bigl(p^{-3}\bigr).
\end{equation}
Moreover, the pointwise expansion
\begin{equation}\label{eq:ball-second-order}
u_p(r)=d(r)+\frac{a_n(r)}{p}+\frac{b_n(r)}{p^{2}}+O\bigl(p^{-3}\bigr)
\end{equation}
holds uniformly for $r\in[0,1]$, where
\[
a_n(r)=-(1+\ell)(1-r)-r\log r,
\qquad
b_n(r)=\frac{\ell^{2}}{2}(1-r)+\ell\,r\log r-\frac12\,r(\log r)^{2},
\]
with the continuous conventions $r\log r=r(\log r)^2=0$ at $r=0$.
\end{theorem}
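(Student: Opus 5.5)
The plan is to verify the profile directly, read off the uniform error from a monotonicity argument in $r$, and obtain both expansions by Taylor-expanding the two exponential factors $n^{-1/(p-1)}$ and $r^{1/(p-1)}$ in the small parameter $\delta:=1/(p-1)$.

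\textbf{Step 1: the profile.} The function $A_{p,n}(1-r^\gamma)$ is exactly the function $w$ of Lemma~\ref{lem:ball-comparison} with centre $0$ and radius $1$. It lies in $W^{1,p}_0(B(0,1))$ and solves $-\Delta_pw=1$ weakly, because its flux is $-x/n$. Uniqueness of the weak solution of \eqref{eq:torsion} then gives \eqref{eq:ball-profile}.

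\textbf{Step 2: sign and exact uniform error.} Put $g(r):=d(r)-u_p(r)=1-r-A_{p,n}(1-r^\gamma)$. Then $g(1)=0$ and $g(0)=1-A_{p,n}$. Moreover
\[
g'(r)=-1+A_{p,n}\gamma r^{1/(p-1)}=-1+n^{-1/(p-1)}r^{1/(p-1)}\le0 \quad\text{on }[0,1],
\]
using $A_{p,n}\gamma=n^{-1/(p-1)}$. So $g$ is nonincreasing, which gives $0\le g\le g(0)$. This yields $u_p\le d$ and $\norm{u_p-d}_\infty=1-A_{p,n}$, with the supremum attained at the centre.

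For the expansion of $1-A_{p,n}$, write $1-1/p$ and $n^{-1/(p-1)}=\exp(-\ell/(p-1))$, and expand $1/(p-1)=1/p+1/p^2+O(p^{-3})$. One gets
\[
A_{p,n}=\bigl(1-\tfrac1p\bigr)\Bigl(1-\tfrac{\ell}{p}-\tfrac{\ell}{p^2}+\tfrac{\ell^2}{2p^2}\Bigr)+O(p^{-3})
=1-\tfrac{1+\ell}{p}+\tfrac{\ell^2}{2p^2}+O(p^{-3}).
\]
The cross term $+\ell/p^2$ cancels the $-\ell/p^2$, which gives the stated coefficient $-\ell^2/(2p^2)$ in \eqref{eq:ball-exact}.

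\textbf{Step 3: the second-order expansion.} Write $r^\gamma=r\,e^{\delta\log r}$. The expansion of $A_{p,n}$ from Step 2 already contributes $(1-r)$ times the constant coefficients. What remains is the expansion of $-A_{p,n}\bigl(r^\gamma-r\bigr)$, with
\[
r^\gamma-r=r\bigl(\delta\log r+\tfrac12\delta^2(\log r)^2\bigr)+R(r).
\]
Collecting powers of $1/p$, with $\delta=1/p+1/p^2+\dots$, produces $a_n$ and $b_n$. The coefficient of $r\log r/p^2$ comes from $\delta$'s second term and from the product of $A_{p,n}$'s first-order part with $\delta\log r$; combined with the rest, it should reduce to $\ell\,r\log r$ in $b_n$, and this bookkeeping must be checked.

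\textbf{Main obstacle: uniformity of the remainder on $[0,1]$.} The quantity $|\log r|$ is unbounded as $r\to0$, so the remainder has to be controlled by functions of the form $r|\log r|^k$. I will use Taylor's theorem with remainder, $|e^t-1-t-t^2/2|\le|t|^3e^{|t|}/6$, applied with $t=\delta\log r\le0$, so that $e^{|t|}$ is never needed. Since $t\le0$, in fact
\[
|e^t-1-t-t^2/2|\le|t|^3/6,
\]
and therefore $R(r)\le r\,\delta^3|\log r|^3/6$. Because $\sup_{[0,1]}r|\log r|^3=(3/e)^3<\infty$, this bound is $O(p^{-3})$ uniformly in $r$. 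The remaining error terms are products of $O(p^{-3})$ constants with bounded functions $r|\log r|^k$, $k\le2$, so they are uniform as well. At $r=0$, the continuous conventions for $r\log r$ and $r(\log r)^2$ make the formula match $u_p(0)=A_{p,n}$ exactly.
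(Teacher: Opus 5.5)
Your proposal is correct and follows the paper's proof essentially step for step: the profile is identified by uniqueness, $d-u_p$ is shown to be monotone in $r$, and $A_{p,n}$ and $r^{\gamma}=r\,e^{\delta\log r}$ are Taylor-expanded with a remainder that is uniform because $r\abs{\log r}^k$ is bounded on $[0,1]$ (the paper phrases this as $\sup_{s\ge0}e^{-s}s^k<\infty$ with $s=-\log r$). The bookkeeping you flagged does close, since $-r\log r/p^2$ from the second term of $\delta$ and $+(1+\ell)\,r\log r/p^2$ from the cross term sum to $\ell\,r\log r/p^2$; note only that at $r=0$ the truncated expansion agrees with $u_p(0)=A_{p,n}$ up to $O(p^{-3})$, not exactly.
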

\begin{proof}
In the proof of Lemma~\ref{lem:ball-comparison}, with $x=0$ and $r=1$, it
was verified that $w(y)=A_{p,n}(1-\abs{y}^{\gamma})$ solves $-\Delta_pw=1$ in
$B(0,1)$ and belongs to $W^{1,p}_0(B(0,1))$; by uniqueness of the weak
solution, $u_p=w$, which is \eqref{eq:ball-profile}. Put $h_p:=d-u_p$. Since
$A_{p,n}\gamma=n^{-1/(p-1)}$ and $\gamma-1=1/(p-1)$,
\[
h_p'(r)=-1+A_{p,n}\gamma\,r^{\gamma-1}=-1+\Bigl(\frac rn\Bigr)^{1/(p-1)}\le0
\qquad(0\le r\le1,\ n\ge1).
\]
Thus $h_p$ is non-increasing with $h_p(1)=0$, so $0\le h_p\le h_p(0)=1-A_{p,n}$,
which gives $u_p\le d$ and the exact identity in \eqref{eq:ball-exact}. For
the expansions,
\[
\log A_{p,n}=\log\Bigl(1-\frac1p\Bigr)-\frac{\ell}{p-1}
=-\frac{1+\ell}{p}-\frac{\tfrac12+\ell}{p^{2}}+O\bigl(p^{-3}\bigr),
\]
so that, since $\tfrac12(1+\ell)^2-(\tfrac12+\ell)=\tfrac12\ell^{2}$,
\begin{equation}\label{eq:A-expansion}
A_{p,n}=1-\frac{1+\ell}{p}+\frac{\ell^{2}}{2p^{2}}+O\bigl(p^{-3}\bigr),
\end{equation}
which is the expansion in \eqref{eq:ball-exact}. For the radial power, write
$s:=-\log r\ge0$, so $r^{\gamma}=e^{-s}e^{-s/(p-1)}$. Taylor's formula with
remainder, multiplied by $e^{-s}$, is uniform in $s\ge0$ because
$\sup_{s\ge0}e^{-s}s^{k}<\infty$ for each fixed $k$; with
$(p-1)^{-1}=p^{-1}+p^{-2}+O(p^{-3})$ this gives
\begin{equation}\label{eq:r-expansion}
r^{\gamma}=r+\frac{r\log r}{p}
+\frac{r\bigl(\log r+\tfrac12(\log r)^{2}\bigr)}{p^{2}}+O\bigl(p^{-3}\bigr)
\end{equation}
uniformly on $[0,1]$. Inserting \eqref{eq:A-expansion} and
\eqref{eq:r-expansion} into $u_p(r)=A_{p,n}(1-r^{\gamma})$ and subtracting
$d(r)=1-r$ gives \eqref{eq:ball-second-order}.
\end{proof}

\begin{corollary}[Optimal leading constant on the ball]\label{cor:ball-constant}
On the unit ball,
\[
\lim_{p\to\infty}p\,\norm{u_p-d}_{L^\infty(B(0,1))}=1+\log n .
\]
In particular the order $1/p$ in Theorems~\ref{thm:universal-rate}
and~\ref{thm:main} cannot be improved in general.
\end{corollary}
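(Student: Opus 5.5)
The limit follows from the exact error formula in Theorem~\ref{thm:ball}, and the optimality claim from the $O(1/p)$ upper bounds already proved. By \eqref{eq:ball-exact}, for every $p>1$,
\[
\norm{u_p-d}_{L^\infty(B(0,1))}=1-A_{p,n},
\qquad
1-A_{p,n}=\frac{1+\ell}{p}-\frac{\ell^2}{2p^2}+O(p^{-3}),
\]
with $\ell=\log n$. Multiplying by $p$ gives
\[
p\,\norm{u_p-d}_{L^\infty}=1+\log n+O(1/p)\longrightarrow1+\log n .
\]
To avoid depending on the second-order term, I can argue directly from $\log A_{p,n}=\log(1-1/p)-\ell/(p-1)$. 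This shows $p\log A_{p,n}\to-(1+\ell)$. Since $\log A_{p,n}\to0$, the ratio $(1-A_{p,n})/(-\log A_{p,n})$ tends to $1$, and the limit follows.

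For the optimality statement, recall what Theorems~\ref{thm:universal-rate} and~\ref{thm:main} assert. Both give $\norm{u_p-d}_{L^\infty}\le C/p$, and the unit ball satisfies the hypotheses of each: it is bounded, and it satisfies Definition~\ref{def:ext-ball} with any $r_e>0$ because it is convex. Now suppose some faster rate $\norm{u_p-d}_{L^\infty}\le\omega(p)$ held for all bounded domains, with $p\,\omega(p)\to0$. Applied to the ball, this would force $p\,\norm{u_p-d}_{L^\infty}\to0$. That contradicts the limit $1+\log n\ge1>0$. Hence no rate of order $o(1/p)$ can hold in general, and the order $1/p$ in both theorems is sharp.

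There is no real obstacle here. The only point needing care is that the expansion of $A_{p,n}$ in Theorem~\ref{thm:ball} is asymptotic. I will either invoke it as proved there or use the elementary logarithm argument above, which needs only first-order information.
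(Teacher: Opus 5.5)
Your proposal is correct and follows the paper's route: the corollary is read off directly from the exact identity $\norm{u_p-d}_{L^\infty(B(0,1))}=1-A_{p,n}$ and its expansion in Theorem~\ref{thm:ball}. Your elementary logarithm argument is a valid first-order shortcut for the limit, and the sharpness conclusion follows as you say, since the ball satisfies the hypotheses of both theorems while $1+\log n\ge 1>0$.
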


\begin{remark}\label{rem:FB-comparison}
The coefficients $a_n$ and $b_n$ agree with the formulas in
Fayolle--Belyaev~\cite[Appendix~B]{FB2026}, specialized to radius $1$ and
ambient dimension $n$. The argument above establishes the uniformity of the
$O(p^{-3})$ remainder up to the medial axis $\{r=0\}$; both coefficients
extend continuously across it.
\end{remark}

The explicit profile gives the exact first-order gradient asymptotics in every
finite $L^q$ space and the $L^\infty$ obstruction.
\begin{proposition}[Gradient asymptotics on the ball]\label{prop:ball-gradient}
Let $\Omega=B(0,1)\subset\R^n$, $n\ge1$, and $1\le q<\infty$. Then
\begin{equation}\label{eq:ball-gradient-q}
\lim_{p\to\infty}p\,\norm{\nabla u_p-\nabla d}_{L^q(B(0,1))}
=\Bigl(\abs{\partial B(0,1)}\int_0^1
\Bigl(\log\frac nr\Bigr)^{q}r^{\,n-1}\,dr\Bigr)^{1/q},
\end{equation}
where $\abs{\partial B(0,1)}=\mathcal H^{n-1}(\partial B(0,1))$, with
$\abs{\partial B(0,1)}=2$ for $n=1$, whereas
\begin{equation}\label{eq:ball-gradient-infty}
\norm{\nabla u_p-\nabla d}_{L^\infty(B(0,1))}=1
\qquad\text{for every }p>1.
\end{equation}
\end{proposition}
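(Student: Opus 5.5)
Everything rests on the explicit profile \eqref{eq:ball-profile}: $u_p(r)=A_{p,n}(1-r^\gamma)$. Its gradient is radial, $\nabla u_p=-A_{p,n}\gamma r^{\gamma-1}\,x/r=-(r/n)^{1/(p-1)}\,x/r$, using $A_{p,n}\gamma=n^{-1/(p-1)}$ as in the proof of Theorem~\ref{thm:ball}. Also $\nabla d=-x/r$ for $x\ne0$. Hence, almost everywhere,
\[
\abs{\nabla u_p-\nabla d}=1-\Bigl(\frac rn\Bigr)^{1/(p-1)}=1-e^{-\log(n/r)/(p-1)},
\]
which lies in $[0,1]$ because $r\le1\le n$. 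Both assertions reduce to the one-variable function $g_p(r):=1-(r/n)^{1/(p-1)}$.

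For \eqref{eq:ball-gradient-infty}, $g_p\le1$ gives the upper bound, and $g_p(r)\to1$ as $r\downarrow0$ because $(r/n)^{1/(p-1)}\to0$ for each fixed $p>1$. The essential supremum therefore equals $1$. This is the only point where the singular behaviour at the center matters.

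For \eqref{eq:ball-gradient-q}, pass to polar coordinates:
\[
p^q\norm{\nabla u_p-\nabla d}_{L^q}^q=\abs{\partial B(0,1)}\int_0^1\bigl(p\,g_p(r)\bigr)^q r^{n-1}\,dr .
\]
For $n=1$ this is the two-point sum over $\pm$, giving the factor $2$. Pointwise for $r\in(0,1]$, we have $p\,g_p(r)\to\log(n/r)$, since $1-e^{-t}\sim t$ with $t=\log(n/r)/(p-1)$ and $p/(p-1)\to1$. For domination, use $1-e^{-t}\le t$ to get $p\,g_p(r)\le \frac{p}{p-1}\log(n/r)\le 2\log(n/r)$ for $p\ge2$. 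The function $(\log(n/r))^q r^{n-1}$ is integrable on $(0,1)$, because $\abs{\log r}^q$ is integrable near $0$ and $r^{n-1}\le1$. Dominated convergence then gives the limit, and taking $q$-th roots yields \eqref{eq:ball-gradient-q}.

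The main point needing care is the domination near $r=0$, where $g_p$ does not converge uniformly. The integrable bound $2\log(n/r)$ handles it, so I expect no real obstacle. One should also note that the right-hand side of \eqref{eq:ball-gradient-q} is finite and positive, which gives the $\Theta(1/p)$ rate quoted in Remark~\ref{rem:gradient-not-sharp}.
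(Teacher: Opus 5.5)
Your proposal is correct and follows essentially the same route as the paper. Both compute $\abs{\nabla u_p-\nabla d}=1-(r/n)^{1/(p-1)}$ from the explicit profile, apply dominated convergence with the bound $2\log(n/r)$ that comes from $1-e^{-t}\le t$ for $p\ge2$, and obtain the essential supremum $1$ from the behaviour near the center.
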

\begin{proof}
For $0<r<1$, \eqref{eq:ball-profile} gives $\nabla u_p=u_p'(r)\hat x$ with
$u_p'(r)=-(r/n)^{1/(p-1)}$, while $\nabla d=-\hat x$; hence
\[
\abs{\nabla u_p-\nabla d}=1-\Bigl(\frac rn\Bigr)^{1/(p-1)}
=1-e^{-\log(n/r)/(p-1)}\ \ge\ 0.
\]
For fixed $r>0$, $p$ times this tends to $\log(n/r)$; and since
$1-e^{-t}\le t$, for $p\ge2$ it is dominated by $2\log(n/r)$, which lies in
$L^q\bigl((0,1),r^{n-1}dr\bigr)$. Dominated convergence gives
\eqref{eq:ball-gradient-q}. For every $\varepsilon\in(0,1)$ the integrand
exceeds $1-\varepsilon$ on a set of positive measure near the origin, so the
essential supremum equals $1$.
\end{proof}

\begin{remark}\label{rem:ball-lessons}
For $n=1$ the ball is the interval $(-1,1)$, and \eqref{eq:ball-gradient-q}
reads $\lim_pp\,\norm{u_p'-d'}_{L^q(-1,1)}=(2\int_0^1\abs{\log r}^q\,dr)^{1/q}$,
while \eqref{eq:ball-gradient-infty} is the one-dimensional example
of~\cite{BLR} for the failure of uniform gradient convergence. In every
dimension \eqref{eq:ball-gradient-infty} shows that
$\norm{\nabla u_p}_{L^\infty}\to1$ (Corollary~\ref{cor:gradient-limit}) does
not imply $\nabla u_p\to\nabla d$ in $L^\infty$: the defect is concentrated at
the cut locus $\{r=0\}$. An $O(1/p)$ gradient rate on compact subsets of
$\Omega\setminus\Sigma$ is plausible but does not follow from interpolation.
\end{remark}



\section*{Declaration of competing interest}
The authors declare that they have no known competing financial interests or
personal relationships that could have appeared to influence the work reported
in this paper.

\end{document}